\newtheorem{theorem}{Theorem}
\newtheorem{lemma}[theorem]{Lemma}
\newtheorem{remark}[theorem]{Remark}
\newtheorem{proposition}[theorem]{Proposition}
\title{The elastic ray transform}
\author{Joonas Ilmavirta}
\address{Department of Mathematics and Statistics,University of Jyv\"askyl\"a, Jyv\"askyl\"a,  Finland ({\tt{joonas.ilmavirta@jyu.fi}})}
\author{Antti Kykk\"anen}
\address{Department of Computational Applied Mathematics and Operations Research, Rice University, Houston, TX, USA ({\tt{antti.kykkanen@rice.edu}}), Corresponding author}
\author{Teemu Saksala}
\address{Department of Mathematics, NC State University, Raleigh, NC, USA  
({\tt{tssaksal@ncsu.edu}})}
\date{\today}
\newcommand{\der}{\mathrm{d}}
\newcommand{\R}{{\mathbb R}}
\newcommand{\C}{{\mathbb C}}
\newcommand{\abs}[1]{\left\lvert#1\right\rvert}
\newcommand{\norm}[1]{\left\Vert#1\right\Vert}
\newcommand{\ip}[2]{\left\langle#1,#2\right\rangle}
\newcommand{\iip}[2]{\left(#1,#2\right)}
\newcommand{\dummy}{{\,\cdot\,}}
\newcommand{\eps}{\varepsilon}
\newcommand{\stiffnesstensor}[1]{\etensor{#1}{2}}
\newcommand{\etensor}[2]{E^{#2}(#1)}
\newcommand{\schwartz}{\mathscr{S}}
\newcommand{\sphere}[1]{{\mathbb S}^{#1}}
\DeclareMathOperator{\im}{im}
\newcommand{\elasticsymmetrization}{\varepsilon}
\newcommand{\eXRT}[1]{X^{#1}}
\newcommand{\XRT}{X}
\newcommand{\eHesse}{H}
\newcommand{\eGrad}{K}
\newcommand{\idmatrix}{I}
\newcommand{\sotimes}{\otimes_{\mathrm{s}}}
\newcommand{\polarizationset}[2]{Q_{#2}(#1)}
\newcommand{\hsymb}[1]{\widehat{H}_{#1}}
\newcommand{\htsymb}[1]{\widehat{H}^\ast_{#1}}
\newcommand{\ksymb}[1]{\widehat{K}_{#1}}
\newcommand{\ktsymb}[1]{\widehat{K}^\ast_{#1}}
\newcommand{\sol}{\mathrm{Sol}}
\newcommand{\pot}{\mathrm{Pot}}
\newcommand{\kuvak}[1]{A_{#1}}
\newcommand{\kuvai}[1]{B_{#1}}
\newcommand{\ydinki}[1]{C_{#1}}
\newcommand{\kinverse}[1]{\widehat{K}^{-1}_{#1}}
\newcommand{\iinverse}[1]{\widehat{H}^{-1}_{#1}}
\newcommand{\avaruus}{A}
\newcommand{\smooth}[1]{C^\infty(\R^n;\etensor{n}{#1})}
\newcommand{\ltwo}[1]{L^2(\R^n;\etensor{n}{#1})}
\begin{document}

\begin{abstract}
We introduce and study a new family of tensor tomography problems. At rank 2 it corresponds to linearization of travel time of elastic waves, measured for all polarizations. We provide a kernel characterization for ranks up to 2. The kernels consist of potential tensors, but in an unusual sense: the associated differential operators have degree 2 instead of the familiar 1. The proofs are based on Fourier analysis, Helmholtz decompositions, and cohomology.
\end{abstract}

\maketitle

\section{Introduction}
\label{sec:introduction}

We introduce and study ray transforms of specific kinds of even rank tensors over Euclidean spaces.
The tensor fields are not fully symmetric and the ray transform is not of any of the usual types that are concerned in the current literature.
We say that an elastic $m$-tensor in $\R^n$ is a symmetric rank $2m$ tensor over the symmetric square of $\R^n$.
In the case $m=2$ this is exactly the symmetry of the stiffness tensor in elasticity --- whence the name.

For each $m \in \{0,1,2,\ldots\}$ there is a natural concept of an elastic X-ray transform, and
in the case $m=2$ this arises from
the linearization of travel time for elastic waves about a homogeneous and isotropic background medium.
This is analogous to how the usual X-ray transform of symmetric $2$-tensors arises from the linearization of travel times for waves corresponding to a Riemannian metric \cite[Section 1.1]{Sharafutdinov1994}.
Symmetric $2$-tensors have the same symmetries as Riemannian metrics, whereas elastic $2$-tensors have those of a stiffness tensor.

This travel time linearization is the reason we set out to study the elastic ray transform.
We only linearize travel time at a homogeneous and isotropic background.
Homogeneity makes most of our operators simply Fourier multipliers and ensures that seismic waves in the unperturbed medium travel along straight lines.
Isotropy is less crucial, but simplifies matters considerably by making the Fourier multipliers rotation invariant.
The perturbation can be arbitrarily inhomogeneous and anisotropic.

While the physically relevant case is $m=2$, we set up the tensor tomography problem for all $m \in \{0,1,2,\ldots\}$.
We provide the complete characterization of the kernel of these transforms in the cases $m=0,1,2$.
The kernel is given by elastic tensor fields that are of potential form, but in a very different sense than for the usual tensor tomography problem.
For example, for $m=1$ the relevant differential operator is of degree $2$ and for $m=2$ the kernel consists of images of two partial differential operators of degrees 1 and 2 respectively.

For technical reasons one of our theorems have to assume $n\geq3$, but fortunately the physical setup has indeed dimension $n=3$.

\subsection{The elastic X-ray transform}

We denote by $\schwartz(\R^n;E)$ the Fréchet space of Schwartz functions taking values in a finite-dimensional complex vector space $E$. We reserve the notation $E\sotimes F$ for the symmetric tensor product of complex vector spaces $E$ and $F$, and 
denote by $E^{\sotimes m}$ the $m$th symmetric tensor power of a vector space $E$.
For any dimension $n\geq1$ and any rank $m\geq0$ we define the space of elastic $m$-tensors as
\begin{equation}
\etensor{n}{m}
=
(\R^n\sotimes\R^n)^{\sotimes m}
.
\end{equation}
In the case $m\geq 2$ the space $\etensor{n}{m}$ is not the same as the space $(\R^n)^{\sotimes 2m}$ of fully symmetric tensors because the symmetries are different:
\begin{equation}
(\R^n)^{\sotimes 2m}
\subsetneq
(\R^n\sotimes\R^n)^{\sotimes m}
\subsetneq
\R^{\otimes 2m}.
\end{equation}
For a concrete example, take $m=2$.
For all $a\in\etensor{n}{2}$ it is true that $a_{ijkl}=a_{jikl}=a_{klij}$, but typically $a_{ijkl}\neq a_{ikjl}$.

We denote the associated elastic symmetrization operator, taking a tensor without symmetries to a tensor with elastic symmetries, by
\begin{equation}
\elasticsymmetrization
\colon
(\R^n)^{\otimes 2m}
\to
\etensor{n}{m}
.
\end{equation}

For $v\in\sphere{n-1}$ we use the notation
\begin{equation}
\polarizationset{v}{}
=
\R v
\cup
v^\perp
\subset
\R^n
,
\end{equation}
which is a union of linear subspaces of dimension one and codimension one respectively.
We call $v \in \sphere{n-1}$ a direction, $q\in\polarizationset{v}{}$ a polarization, and $f\in\schwartz(\R^n;\etensor{n}{m})$ or $f \in \ltwo{m}$ an elastic $m$-tensor field.

We define the elastic ray transform of an elastic tensor field as the line integral of the tensor field contracted against a tensor power of the product of velocity and polarization.
Specifically, for all $f\in\schwartz(\R^n;\etensor{n}{m})$, $x\in\R^n$, $v\in\sphere{n-1}$, and $q\in\polarizationset{v}{}$ we denote
\begin{equation}
\label{eqn:exrt}
\eXRT{m}_{v,q}f(x)
=
\int_\R
\ip{f(x+tv)}{(v\otimes q)^{\otimes m}}
\der t
.
\end{equation}
For example, in the case $m=2$ we have the following component expression
\begin{equation}
\eXRT{2}_{v,q}f(x)
=
\int_\R
\sum_{i,j,k,l}
f_{ijkl}(x+tv)v_iq_jv_kq_l
\der t
.
\end{equation}
Formula \eqref{eqn:exrt} defines a family $\eXRT{m} \coloneqq \{\eXRT{m}_{v,q}\}$ of operators which we call the elastic X-ray transform.

We say that the elastic X-ray transform of $f$ vanishes and write $\eXRT{m}f=0$ if $\eXRT{m}_{v,q}f = 0$ for all $v \in \sphere{n-1}$ and $q \in \polarizationset{v}{}$.
Hence, we adopt the notation 
\begin{equation}
\ker(\eXRT{m}):=\bigcap_{v\in\R^n,\: q\in\polarizationset{v}{}}\ker(\eXRT{m}_{v,q}).
\end{equation}
If $f \in \schwartz(\R^n;\etensor{n}{m})$, it is straightforward to verify that for all $v \in \sphere{n-1}$ and unit vectors $q \in \polarizationset{v}{}$ have that
\begin{equation}
\norm{\eXRT{m}_{v,q}f}_{L^2(v^\perp;\R)}
\leq
\norm{f}_{\ltwo{m}}.
\end{equation}
Therefore, each $\eXRT{m}_{v,q}$ extends to a continuous linear map from $\ltwo{m}$ to $L^2(v^\perp;\R)$.

Our main results are kernel characterizations of the elastic X-ray transforms of fields of rank $1$ and $2$. We give an overview of the results in the next subsections. 

\subsection{General rank}

Let $D$ be the operator acting on an $m$-tensor field $f$ by
\begin{equation}
(Df)_{ji_1 \cdots i_m}
=
\partial_jf_{i_1\cdots i_m}
\end{equation}
and let $D^2f = D(Df)$.
For any $m\geq1$, we define a differential operator $\eHesse_m$ mapping $\etensor{n}{m-1}$-fields to $\etensor{n}{m}$-fields by
\begin{equation}
\label{eq:Op_Hesse}
\eHesse_m h
=
\elasticsymmetrization(D^2h)
.
\end{equation}

This is a well defined map e.g. $
\eHesse_m
\colon
\schwartz(\R^n;\etensor{n}{m-1})
\to
\schwartz(\R^n;\etensor{n}{m})
$.
For all ranks $m \geq 1$ the image of $H_m$ lies always in the kernel of the elastic X-ray transform $X^m$.
See Proposition \ref{prop:hesse-in-kernel-all-ranks} for details.

We do not pursue full kernel characterization for $m\geq3$.
The two main reasons are that the known physical situation is $m=2$ and explicit calculations --- especially computer-assisted ones --- are not possible for general rank.

\subsection{Rank 0}

Rank $0$ elastic tensor fields are scalar fields (functions) on $\R^n$.
The elastic ray transform, $\eXRT{0}_{v,q}$ for any $v \in \sphere{n-1}$ and $q \in \polarizationset{v}{}$, is the classical X-ray transform of functions on $\R^n$. This transform is well known to be injective~\cite{Helgason2011,Radon1917}, so $\ker(\eXRT{0})=0$.

\subsection{Rank 1}

For elastic tensor fields on rank $1$ we have the following kernel characterization.

\begin{theorem}[Proven in Section~\ref{sec:proof-for-rank-1}]
\label{thm:ker-characterization-1}
Let $n\geq2$. A tensor field $f\in C_c^\infty(\R^n;\etensor{n}{1})$ satisfies $\eXRT{1}f=0$ if and only if $f=\eHesse_1 h$ for some $h\in C_c^\infty(\R^n)$.
\end{theorem}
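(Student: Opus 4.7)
The plan is to use Fourier analysis, in the Helmholtz-style spirit announced in the abstract. The easy direction $f = \eHesse_1 h \Rightarrow \eXRT{1} f = 0$ is Proposition~\ref{prop:hesse-in-kernel-all-ranks} specialized to $m=1$. For the converse, assume $f\in C_c^\infty(\R^n;\etensor{n}{1})$ satisfies $\eXRT{1} f=0$.

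First I would pass to the Fourier side. For any direction $v\in\sphere{n-1}$ and polarization $q\in\polarizationset{v}{}$ the scalar function $\ip{f(\dummy)}{v\otimes q}$ has vanishing classical line transform in direction $v$, and the Fourier slice theorem translates this to the pointwise identity $v^T \hat f(\xi) q = 0$ for every $\xi\in v^\perp$. Rephrased for fixed $\xi\neq0$, the symmetric matrix $M:=\hat f(\xi)$ satisfies $v^T M q=0$ for every unit $v\perp\xi$ and every $q\in\R v\cup v^\perp$. Since the set $\R v\cup v^\perp$ spans $\R^n$ when $n\geq2$, this forces $Mv=0$ for all $v\in\xi^\perp$. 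Hence $\xi^\perp\subseteq\ker M$, and symmetry of $M$ upgrades this to $M=\alpha(\xi)\,\xi\xi^T$ for a unique scalar $\alpha(\xi)$.

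The natural candidate for the potential is $\hat h:=-\alpha$, giving $\hat f_{ij}(\xi)=-\xi_i\xi_j\hat h(\xi)$, which is the Fourier version of $f=\eHesse_1 h$. The substantive task is to show $h\in C_c^\infty(\R^n)$. Since $f$ is smooth and compactly supported, each component of $\hat f$ is entire of exponential type with Schwartz decay on $\R^n$ by Paley--Wiener. For each $i$, the identity $\hat f_{ii}(\xi)=\alpha(\xi)\xi_i^2$ on $\{\xi_i\neq0\}$ forces the entire function $\hat f_{ii}$ to vanish to order two on the hyperplane $\{\xi_i=0\}$, and the division lemma for analytic functions factors $\hat f_{ii}(\xi)=\xi_i^2 g_i(\xi)$ with $g_i$ entire. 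Comparison on the open set $\{\xi_i\neq0\}$ gives $g_i=\alpha=g_j$ there, and analytic continuation forces all $g_i$ to coincide with a single entire function $\tilde\alpha$. Setting $\hat h:=-\tilde\alpha$ provides an entire function satisfying $\hat f_{ij}(\xi)=-\xi_i\xi_j\hat h(\xi)$ everywhere. The identity $|\xi|^2|\hat h(\xi)|=|\operatorname{tr}(\hat f)(\xi)|$ transfers the Paley--Wiener estimates on $\hat f$ to $\hat h$, yielding $h\in C_c^\infty(\R^n)$ with support in any ball containing $\operatorname{supp} f$.

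The main obstacle is this last regularity and support step. One must verify that the scalar $\alpha$, defined only pointwise from the rank-one structure of $\hat f(\xi)$, is in fact a single entire function of the correct Paley--Wiener type, so that naively dividing by $|\xi|^2$ does not destroy compactness of support. This is where the structural constraint from the linear-algebra step pays off: the order-two vanishing of $\hat f_{ii}$ on each coordinate hyperplane, inherited from the factorization $\hat f_{ii}=\alpha\xi_i^2$, is exactly what is needed to invoke analytic division, and the Fourier transform is what converts the elastic ray transform condition into this structural constraint.
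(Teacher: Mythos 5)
Your route is genuinely different from the paper's. The paper proves this theorem by a cohomological argument: in two dimensions it applies Stokes' theorem to rectangles collapsing onto the integration lines to deduce that the one-forms $\omega_i=\sum_j f_{ij}\,\der x^j$ are closed, then invokes the compactly supported Poincar\'e lemma twice to produce $h$; for $n\geq3$ it restricts to $2$-planes and patches the planar potentials together. Your first step --- the Fourier slice theorem plus the observation that $\polarizationset{v}{}$ spans $\R^n$, forcing $\hat f(\xi)=\alpha(\xi)\xi\xi^T$ --- is exactly the computation the paper uses in its \emph{other} rank-one result (Proposition~\ref{thm:ims-and-kers-1}, solenoidal injectivity), and the paper explicitly remarks that one could complete a second proof of the kernel characterization from there via a Helmholtz-type decomposition but does not pursue it. You are carrying out that alternative in the compactly supported category via Paley--Wiener, which buys a uniform argument in all dimensions $n\geq2$ with no plane-restriction bookkeeping; the paper's argument buys an explanation (via the symmetry of $f$ forcing the intermediate potential $w$ to be closed) of \emph{why} the gauge is second order.

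One step needs repair. You propose to transfer the Paley--Wiener estimates to $\hat h$ through the trace identity $\bigl(\sum_i\xi_i^2\bigr)\hat h(\xi)=-\operatorname{tr}\hat f(\xi)$. On $\R^n$ this gives Schwartz decay of $\hat h$, but compact support of $h$ requires exponential-type bounds on all of $\C^n$, and the complexified polynomial $\sum_i z_i^2$ vanishes on an unbounded cone, so dividing by it does not propagate the estimates there. Two fixes: (i) use the diagonal identities $\hat f_{ii}(z)=-z_i^2\hat h(z)$ and, for each $z\neq0$, choose $i$ with $\abs{z_i}\geq\abs{z}/\sqrt n$ to get $\abs{\hat h(z)}\leq n\abs{z}^{-2}\max_i\abs{\hat f_{ii}(z)}$, which does transfer the Paley--Wiener bounds; or (ii) avoid Paley--Wiener for the support statement altogether: once $h\in\schwartz(\R^n)$ satisfies $\partial_i\partial_jh=f_{ij}$, the function $h$ is affine on the (connected, since $n\geq2$) unbounded component of the complement of $\operatorname{supp}f$, and decay at infinity forces it to vanish there. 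With either patch the argument is complete; the two-stage analytic division producing a single entire $\tilde\alpha$ is fine as you wrote it, since $\alpha=\operatorname{tr}\hat f/\abs{\xi}^2$ is continuous away from the origin and the real hyperplanes $\{\xi_i=0\}\setminus\{0\}$ are uniqueness sets for the restricted entire functions.
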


It is noteworthy that $\eHesse_1$ is a second order differential operator unlike the first-order symmetric covariant derivative familiar from longitudinal tensor tomography.
The proof of Theorem~\ref{thm:ker-characterization-1} is based on a cohomological argument and symmetry properties of elastic tensor fields. The proof sheds light on why the kernel is characterized by a second order differential operator.

\subsection{Rank 2}

For elastic tensor fields of rank $2$ we prove injectivity of~$\eXRT{2}$ up to a gauge. In what follows, $\eHesse_2$ is the second order operator introduced in equation~\eqref{eq:Op_Hesse} and $\eGrad$ is a first order operator mapping vector fields to tensor fields $\R^n \to \etensor{n}{2}$ as
\begin{equation}
\label{eqn:oper-k}
\eGrad W = \elasticsymmetrization(DW \otimes I)
\end{equation}
where $I$ is the identity matrix. This map can be defined between various function spaces. Our main result states that the images of the operators $\eGrad$ and $\eHesse$ always lie in the kernel of the elastic ray transform, while those fields that are in the intersection of the kernels of the respective adjoint operators $\eGrad^\ast$ and $\eHesse^\ast$ are never annihilated by the elastic ray transform. This is summarized in the following theorem.
In Theorem~\ref{thm:ims-and-kers-2} all operators operate between the Schwartz classes.

\begin{theorem}[Proven in Section~\ref{sec:proof-for-rank-2}]
\label{thm:ims-and-kers-2}
For $n\geq1$ the following hold:
\begin{enumerate}
\item\label{claim:ker1}
$\im(\eHesse)+\im(\eGrad)\subset\ker(\eXRT{2})$.
\item\label{claim:ker2}
$\ker(\eHesse^*)\cap\ker(\eGrad^*)\cap\ker(\eXRT{2})=0$.
\end{enumerate}
\end{theorem}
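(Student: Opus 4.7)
The plan is to prove the two claims separately.

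For Claim~\ref{claim:ker1}, the inclusion $\im(\eHesse) \subset \ker(\eXRT{2})$ is already covered by Proposition~\ref{prop:hesse-in-kernel-all-ranks}, and it remains to show $\im(\eGrad) \subset \ker(\eXRT{2})$ by a direct pointwise computation. For $f = \eGrad W$, the pairing $\ip{f(y)}{(v \otimes q)^{\otimes 2}}$ reduces (after dropping the elastic symmetrizer, which is harmless once we contract against a tensor whose elastic symmetrization is $f$) to $(v \cdot q) \cdot (\sym\partial W)(v, q)(y)$. The prefactor $v \cdot q$ kills the case $q \in v^\perp$; for $q \in \R v$ the second factor is a $(v \cdot \nabla)$-derivative of $v \cdot W$, whose line integral along $t \mapsto x + tv$ vanishes by Schwartz decay.

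For Claim~\ref{claim:ker2} I would use Fourier analysis. The Fourier transform turns $\eHesse^\ast f = 0$ into $\xi_a \xi_b \hat f_{abcd}(\xi) = 0$; $\eGrad^\ast f = 0$ into $\xi_a \hat f_{abcc}(\xi) = 0$; and by the Fourier slice theorem, $\eXRT{2} f = 0$ into $\hat f_{abcd}(\xi) v^a q^b v^c q^d = 0$ for all $v \in \xi^\perp$ and $q \in \polarizationset{v}{}$. It suffices to show that for each $\xi \neq 0$ these pointwise conditions force $F := \hat f(\xi) = 0$. I would fix such a $\xi$ and use the decomposition $\R^n = \R\xi \oplus \xi^\perp$ to split $F$ into pair-type blocks $F^{X,Y}$ with $X, Y \in \{PP, PN, NN\}$, labelling an index $P$ if it is parallel to $\xi$ and $N$ if in $\xi^\perp$. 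The $\eHesse^\ast$ condition kills the $(PP, \cdot)$ blocks, and the elastic big-swap symmetry kills $(\cdot, PP)$, leaving only $(PN, PN)$, $(PN, NN) \sim (NN, PN)$, and $(NN, NN)$. The X-ray constraint then splits into two strands: with $v \in \xi^\perp$ and $q \in \R v$ it yields $F^{NN, NN}(v, v, v, v) = 0$ for all $v \in \xi^\perp$; with $v \in \xi^\perp$ and $q \in v^\perp$ parametrized as $q = q_P \hat\xi + q_N$ (where $q_N \in \xi^\perp$ and $q_N \perp v$), the constraint expands as a polynomial in $q_P$ whose coefficients must vanish separately. The $q_P^2$-coefficient forces $F^{PN, PN}(v, v) = 0$ and hence $F^{PN, PN} = 0$ by polarization. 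The constant coefficient combined with the first strand forces $F^{NN, NN} = 0$. The $q_P$-linear coefficient combined with the $\eGrad^\ast$ trace condition forces $F^{PN, NN} = 0$. Thus $\hat f(\xi) = 0$ everywhere and $f = 0$.

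The main obstacle is the pointwise algebraic bookkeeping in Fourier space. The most delicate step is eliminating the $(NN, NN)$ block, which is an elastic $2$-tensor on $\R^{n-1}$: one must show that $G(v,v,v,v) = 0$ for all $v$ together with $G(v,q,v,q) = 0$ for $v \perp q$ forces $G = 0$. The first condition kills the totally symmetric part of $G$ by polarization; the second, using that a quadratic form vanishing on a hyperplane is divisible by the normal's linear form and the symmetry $G(v,q,v,q) = G(q,v,q,v)$, forces the biquadratic form $(v, q) \mapsto G(v, q, v, q)$ to vanish identically, after which injectivity of the partial-symmetrization map $G \mapsto G_{abcd} v^a v^c q^b q^d$ on elastic $2$-tensors yields $G = 0$. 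The $(PN, NN)$ block requires a similar combined argument: the $q_P$-linear X-ray constraint alone leaves a one-parameter family of candidates (parametrised by a vector $u \in \xi^\perp$ and a scalar multiple of the identity on $\xi^\perp$) that the $\eGrad^\ast$ trace condition then rules out as soon as $n \geq 2$.
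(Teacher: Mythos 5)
Your treatment of Claim~\ref{claim:ker1} coincides with the paper's: Proposition~\ref{prop:hesse-in-kernel-all-ranks} for $\eHesse$, and for $\eGrad$ the observation that the integrand carries the prefactor $\ip{v}{q}$ and becomes a total $t$-derivative when $q\in\R v$. For Claim~\ref{claim:ker2} your overall frame (Fourier slice theorem, fix a frequency, kill $\hat f(\xi)$ by pointwise linear algebra) is also the paper's, but you replace the paper's computer-assisted Lemmas~\ref{lma:no-pw-kernel} and~\ref{lma:reduction} with a by-hand block decomposition of $\hat f(\xi)$ relative to $\R\xi\oplus\xi^\perp$. This is a genuinely different route for the hard part: the paper parametrizes $v$ and $q$ polynomially and has Maxima solve the resulting linear systems in dimensions up to $5$, reducing higher dimensions to those; your argument is uniform in $n$ and human-readable. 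Your residual family for the $(PN,NN)$ block ($F_{1ijk}=u_i\delta_{jk}$, killed by the $\eGrad^*$ trace) is exactly the paper's conditions $a_{1ijk}=0$ for $j\neq k$ and $a_{1i22}=\dots=a_{1inn}$ combined with~\eqref{eq:vv4}, so the two computations agree.

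Two points in your $(NN,NN)$ analysis need shoring up. First, the injectivity of $G\mapsto\bigl[(v,q)\mapsto G_{abcd}v_aq_bv_cq_d\bigr]$ on elastic $2$-tensors is asserted but is the crux of that block: it fails for general $4$-tensors and is the analogue of ``sectional curvature determines the curvature tensor.'' It is true here (a kernel element would satisfy $G_{abcd}=-G_{cbad}$, which together with the elastic symmetries forces $G=0$; equivalently, the biquadratic form is nonzero on the $S^{(2,2)}$ summand of $\etensor{n}{2}$), but it requires a proof. Second, divisibility of $B(v,q)=G(v,q,v,q)$ by $v\cdot q$ together with the symmetry $B(v,q)=B(q,v)$ does \emph{not} by itself force $B\equiv 0$: any $B(v,q)=(v\cdot q)\ell(v,q)$ with $\ell$ symmetric bilinear survives both. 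You must feed the quartic condition back in at that point --- $B(v,v)=\abs{v}^2\ell(v,v)=0$ kills $\ell$ --- which your main outline does say (``combined with the first strand'') but your detailed paragraph attributes to the orthogonality condition alone. With these two steps made explicit the argument closes, and I would say it is a worthwhile alternative to the computer-assisted proof precisely because it works in all dimensions at once.
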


We supplement the result of Theorem~\ref{thm:ims-and-kers-2} by proving that elastic $2$-tensor fields admit Helmholtz decompositions into a potential part and a solenoidal part in Section~\ref{sec:helmholtz}. For technical reasons the existence of such decompositions is proved only in dimensions $n \geq 3$. In particular, this assumption is necessary to prove Lemma~\ref{lma:schwartz-decomposition}.
We believe that the dimensional assumption is merely an artifact of the proof method, not a feature of the problem.
With a Helmholtz type decomposition adapted to the operators~$K$ and~$H$ (instead of the gradient) we get the kernel characterization in Theorem~\ref{thm:ker-characterization-2}. Alas, we do not have a physical interpretation for the kernel of $\eXRT{2}$.

\begin{theorem}[Proven in Section~\ref{sec:helmholt-final-proof}]
\label{thm:ker-characterization-2}
Let $n \geq 3$ and let $f \in \ltwo{2}$. Then $Xf = 0$ if and only if $f =KW + Hh$ for some $W \in H^1_K$ and $h \in H^2_H$.
\end{theorem}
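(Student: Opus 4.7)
The plan is to deduce Theorem~\ref{thm:ker-characterization-2} from Theorem~\ref{thm:ims-and-kers-2} by combining it with the Helmholtz-type decomposition developed in Section~\ref{sec:helmholtz}. The decomposition plays the role of a bridge: Theorem~\ref{thm:ims-and-kers-2} describes the two "extreme" pieces (pure image and pure co-kernel), and the decomposition says every $\ltwo{2}$ field is a sum of such pieces.

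The forward implication is the $\ltwo{2}$-extension of Theorem~\ref{thm:ims-and-kers-2}\eqref{claim:ker1}. On Schwartz fields, $\im(\eHesse)+\im(\eGrad)\subset\ker(\eXRT{2})$. The spaces $H^1_K$ and $H^2_H$ are designed so that $\eGrad W,\eHesse h\in\ltwo{2}$ and are limits of Schwartz images of operators $\eGrad$ and $\eHesse$. Since $\eXRT{2}$ is bounded on $\ltwo{2}$, the containment persists and $\eXRT{2}(\eGrad W+\eHesse h)=0$.

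For the reverse implication, let $f\in\ltwo{2}$ with $\eXRT{2}f=0$. Apply the $(\eGrad,\eHesse)$-adapted Helmholtz decomposition from Section~\ref{sec:helmholtz} (whose existence is the content of that section and requires $n\geq3$ via Lemma~\ref{lma:schwartz-decomposition}) to split
\begin{equation*}
f=f_{\pot}+f_{\sol},\qquad f_{\pot}\in\imk+\imh,\ \ f_{\sol}\in\kerht\cap\kerkt.
\end{equation*}
By construction the potential part is realized concretely as $f_{\pot}=\eGrad W+\eHesse h$ for some $W\in H^1_K$ and $h\in H^2_H$ (not merely as a limit). The forward direction gives $\eXRT{2}f_{\pot}=0$, hence $\eXRT{2}f_{\sol}=0$ as well. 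Thus $f_{\sol}\in\kerht\cap\kerkt\cap\ker(\eXRT{2})$. An $\ltwo{2}$-version of Theorem~\ref{thm:ims-and-kers-2}\eqref{claim:ker2}---which should follow from the Fourier-multiplier nature of the Schwartz proof by a density and continuity argument---forces $f_{\sol}=0$. We conclude $f=\eGrad W+\eHesse h$.

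The principal obstacle sits upstream of this argument, in Section~\ref{sec:helmholtz}: proving that the pair $(\eGrad,\eHesse)$ yields a genuine direct-sum Helmholtz decomposition of $\ltwo{2}$, with the potential part expressible as $\eGrad W+\eHesse h$ for explicit $W\in H^1_K$, $h\in H^2_H$. The dimensional hypothesis $n\geq3$ enters precisely there, through Lemma~\ref{lma:schwartz-decomposition}. A secondary, lighter point is the promotion of Theorem~\ref{thm:ims-and-kers-2}\eqref{claim:ker2} from the Schwartz class to $\ltwo{2}$, which is expected to be routine but should nonetheless be verified explicitly once the Fourier-analytic form of the Schwartz proof is available.
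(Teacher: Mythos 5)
Your proposal is correct and takes essentially the same route as the paper: extend both parts of Theorem~\ref{thm:ims-and-kers-2} to $\ltwo{2}$ by density and continuity of $\eXRT{2}_{v,q}$, then apply the Helmholtz decomposition of Theorem~\ref{thm:helmholtz} to a field in the kernel and kill its solenoidal part. The paper's proof in Section~\ref{sec:helmholt-final-proof} is exactly this, including the preliminary remark that every element of $\pot_2$ is genuinely of the form $KW+Hh$ with $W\in H^1_K$ and $h\in H^2_H$ by continuity of $\eGrad$ and $\eHesse$.
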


Here $H^1_K$ is the space of vector fields for which $\eGrad W$ is defined in the weak sense and belongs to  $L^2$, and $H^2_H$ is the space of elastic $1$-tensor fields for which $\eHesse_2 h$ is defined in the weak sense and is an element of $L^2$ (see Section~\ref{sec:helmholt-final-proof} for the detailed definitions).

For rank $2$ elastic tensors it is an important feature of the problem that the polarization set is exactly $\polarizationset{v}{} = \R v \cup v^\perp$ and not for example the vector space $\R^n$ generated by the set $\polarizationset{v}{}$.
The definition of the ray transform and the differential operators make perfect sense if we replace the polarization set $\polarizationset{v}{}$ with $\R^n$, but this changes the problem drastically. In this case it no longer holds that $\im(\eGrad) \subseteq \ker(\eXRT{2})$ since for $q \in \R^n$ such that $q \ne v$ and $q \not\perp v$ we do not get the total derivative in~\eqref{eqn:k-in-kernel}.

\subsection{Organization of the paper}

In Section~\ref{sec:rationale-comparison} we provide the broader motivation to the study of the elastic ray transform (Section~\ref{sec:linearization}). Also, we establish connections to the existing literature on related ray transforms, such as the longitudinal and the mixed ray transforms (Section~\ref{sec:other-transforms}).

Section~\ref{sec:ranks} is divided into subsections according to the rank of the elastic tensor field under consideration. In Section~\ref{sec:general-rank-proof} we prove that for any rank $m$ the image of $\eHesse_m$ is always contained in the kernel of the elastic ray transform $\eXRT{m}$. Section~\ref{sec:proof-for-rank-1} is concerned with elastic tensor fields of rank $1$. In here, we prove Theorem~\ref{thm:ker-characterization-1} using the Poincar\'e lemma. Additionally, in Theorem~\ref{thm:ims-and-kers-1}, we show by different methods that the ray transform $\eXRT{1}$ is solenoidally injective in analogy with our main result Theorem~\ref{thm:ims-and-kers-2}. Theorem~\ref{thm:ims-and-kers-2} is proved in Section~\ref{sec:proof-for-rank-2} with the aid of lemmas~\ref{lma:fst},~\ref{lma:no-pw-kernel} and~\ref{lma:reduction}. Lemma~\ref{lma:fst} is an adaptation of the Fourier slice theorem. Lemmas~\ref{lma:no-pw-kernel} and~\ref{lma:reduction} are rather technical requiring computer assisted proofs and their proofs are postponed until Section~\ref{sec:cas-proofs}.

Section~\ref{sec:helmholtz} is devoted to stating and proving Helmholtz decompositions, see Theorem \ref{thm:helmholtz}, for elastic tensor fields of rank 2 in dimensions 3 and higher. The section consists of three parts. In Section~\ref{sec:pointwise-decompositions} we prove by algebraic arguments a pointwise decomposition of elastic tensors on the Fourier domain. In Section~\ref{sec:global-decompositions-Fourier} we show that the pointwise algebraic decompositions can be promoted to decompositions of elastic tensor fields of certain regularity classes, having particular emphasis on the $L^2$-regular tensor fields. The main content is the regularity and decay analysis in Lemma~\ref{lma:schwartz-decomposition}. Section~\ref{sec:helmholt-final-proof}, finalizes the proof of the Helmholtz decomposition using density arguments in Theorem~\ref{thm:helmholtz} which is then used in combination with Theorem~\ref{thm:ims-and-kers-2} to prove Theorem~\ref{thm:ker-characterization-2}.

Section~\ref{sec:cas-proofs} completes the proofs of Lemmas~\ref{lma:no-pw-kernel} and~\ref{lma:reduction} and explains the Maxima codes used in the proofs.

\subsection*{Acknowledgements}
J.I. and A.K. were supported by the Research Council of Finland (Flagship of Advanced Mathematics for Sensing Imaging and Modelling grant 359208; Centre of Excellence of Inverse Modelling and Imaging grant 353092; and other grants 351665, 351656, 358047).
J.I. also acknowledges support from the V\"ais\"al\"a project grant by the Finnish Academy of Science and Letters.
A.K. was supported by the Geo-Mathematical Imaging Group at Rice University.
T.S. was supported by the National Science Foundation grant DMS-2204997.
We thank Maarten V. de Hoop for discussions and hosting us on our many visits to Rice University. We thank the anonymous referees for the valuable remarks and suggestions. These improved the article considerably.

\section{Rationale and comparison to other problems}
\label{sec:rationale-comparison}

\subsection{Linearized travel time and the elastic wave equation}
\label{sec:linearization}

This subsection provides a somewhat soft justification for our ray transform, arising from the linearization of travel times of elastic waves.

Consider a matrix-valued hyperbolic operator $\square_A=\partial_t^2-\Delta_A$ depending on some tensor field $A$.
At least if $\square_A$ is of real principal type, the singularities propagate by the Hamiltonian flows with the Hamiltonians given by the different eigenvalues $\lambda_k(\sigma(\Delta_A))$ of the spatial component of the principal symbol $\sigma(\Delta_A)$ (c.f. \cite{dencker1982propagation}).
We model wave propagation by these flows even though singularities might not follow them in full generality.
We say that the travel time of a wave, associated with the eigenvalue $\lambda_k$, between two points is the time it takes for the respective Hamiltonian flow to travel between the points.   

Let us consider a one-parameter family $s\mapsto A_s$ of ``material parameters'' and differentiate the travel time of the associated waves with respect to $s$.
The general principle is that this derivative is proportional to the integral of $\partial_s\lambda_k(\sigma(\Delta_{A_s}))|_{s=0}$ over the integral curve of the unperturbed trajectory at $s=0$.

When the $k$th eigenvalue $\lambda_k(\sigma(\Delta_{A_0}))$ degenerates, the travel time derivative is more complicated.
To reduce clutter, we may consider $s\mapsto M_s\coloneqq \sigma(\Delta_{A_s})$ simply as a smooth one-parameter family of symmetric real matrices.
If the $k$th eigenvalue $\lambda_k(M_0)$ is degenerate, then the perturbation may split it and the first order corrections to the eigenvalue are the eigenvalues of block of the perturbation matrix $M'\coloneqq\partial_sM_s|_{s=0}$ corresponding to the eigenspace of $\lambda_k(M_0)$.
The map from $M'$ to these eigenvalues is not linear.
For more details on the spectral perturbation theory of linear operators, see Appendix~\ref{app:spt} or e.g.~\cite{Kato1966}.

Our interest is in such travel time linearization for the elastic wave equation: $(\partial_t^2 - L)u(t,x)=0$ which can be written in Cartesian coordinates as
\begin{equation}
Lv(x)_i
=
\rho(x)^{-1}
\sum_{j,k,l}
\partial_{x_j}(c_{ijkl}(x)\partial_{x_l}v_k(x))
.
\end{equation}
The stiffness tensor field $c$ takes values in $\etensor{n}{2}$ and the density $\rho$ is a scalar.
We assume that the background stiffness tensor field at $s=0$ is isotropic and homogeneous; we will discuss in a moment the simplifications afforded by this assumption.
There are two eigenvalues of the principal symbol of the spatial part (also known as the Christoffel matrix $\Gamma_{ik}(x,v)=\rho^{-1}(x)\sum_{j,l}c_{ijkl}(x)v_jv_l$), one for pressure waves (polarized along the wave, multiplicity $1$) and one for shear waves (polarized orthogonal to the wave, multiplicity $n-1$).
The aim is to recover as much information as possible of the perturbation $f$ of the density normalized stiffness tensor $\rho^{-1}c$ field from the linearized travel time data.

The linearization of pressure wave travel times gives our elastic ray transform $\eXRT{2}$ for $q=v$.
This is exactly the longitudinal ray transform.
The eigenvalue is simple.

For shear waves the situation is more complicated when $n\geq3$ because for some points $(x,v)\in \R^n \times \sphere{n-1}$ some of the eigenvalues of $\Gamma(x,v)$ are degenerate. 
If the perturbation is homogeneous, the situation reduces to spectral perturbation theory as described above.
The polarization $q$ is an eigenvector of the shear block of the perturbation of the Christoffel matrix.
The admissible set of polarizations depends on the data and the ``linearization''\footnote{It is a general feature of spectral perturbation theory that Gateaux derivative is well defined but does not correspond to a Fr\'echet derivative. This renders the linearized problem non-linear. See Appendix~\ref{app:spt}.} does not depend linearly on the perturbation $f$.
To turn the linearized problem linear, we allow $q$ to take all values orthogonal to $v$ and do not allow $q$ to vary in time in a way that depends on $f$.
Therefore our $\eXRT{2}$ is truly the linearization of the travel time data only in two dimensions.
We are not aware of a full and rigorous treatment of degeneration-breaking linearization of travel time of elastic waves.

Our data for pressure waves corresponds to the longitudinal ray transform and for shear waves a mixed ray transform.
The natural symmetry types of tensors for these two problems are different, and both also different from the natural symmetry of stiffness tensors.
To stay relevant to the physical problem, the symmetry we assume is the one stemming from the elastic model.

We made two assumptions on the background stiffness tensor: homogeneity and isotropy.
In many anisotropic situations it happens that the degeneracy of the principal symbol (the Christoffel matrix) depends on direction.
This makes the set $Q(v)$ and even its dimension depend on $v$, making the behavior of the ray transform far less clean.
Homogeneity has the advantage that the trajectories we integrate over are simply lines $t\mapsto x+tv$.
The constant velocity $v$ need not be unit length and will be different for pressure and shear waves, but this can be scaled out of the ray transform.
The same is true for anisotropic homogeneous media, where the set of admissible speeds also depends on direction.
The relevant ray transform does not depend on the constant Lam\'e parameters of the background model, whereas for an inhomogeneous background the dependence would be significant.
Translation invariance also serves to make Fourier analysis accessible.
Getting rid of these assumptions is an obvious direction for future developments.

\subsection{Relation other kinds of tensor tomography}
\label{sec:other-transforms}

Even thought the elastic ray transform (for $m=2$) combines the data of the longitudinal and the mixed ray transform the symmetry type of the tensor field is different. For this reason we cannot simply combine results on the longitudinal and mixed transforms to prove results for the elastic ray transform.

The longitudinal X-ray transform of a fully symmetric $m$-tensor field is defined as
\begin{equation}
X_{L}f(x,v)
=
\int_\R
\ip{f(x+tv)}{v^{\otimes m}}
\,dt.
\end{equation}
This transform is known to be solenoidally injective on various different function spaces.
This transform has been extensively studied in many geometries~\cite{IM2019,PSU2014,PSU2023,Sharafutdinov1994}.
The Helmholtz decomposition of a sufficiently smooth tensor field $f$ is the decomposition
\begin{equation}
f
=
g
+
dh
\end{equation}
where $d$ is the symmetrized gradient, 
$g$ is a symmetric $m$-tensor field with $\delta g = 0$ ($\delta$ is the divergence),
and $h$ is a symmetric $(m-1)$-tensor field that vanishes at infinity. Solenoidal injectivity means that the solenoidal part $g$ of $f$ can be recovered from the knowledge of $X_{L}f$.

A method for proving solenoidal injectivity of $X_{L}$ in $\R^n$ is based on the Fourier transform. Given an $m$-tensor field with $X_{L}f=0$ the Fourier slice theorem implies that $\hat f(p)(v,\dots,v) = 0$ when $p \perp v$. A computation then shows that $\hat f(p)(v,\dots,v) = ip \cdot v\hat h(p)(v)$ where $\hat h(p)(\,\cdot\,)$ is a homogeneous polynomial of degree $m-1$. Under sufficient smoothness assumptions this implies that $f = dh$.

Our analysis of the elastic X-ray transform employs similar methods; the Fourier transform, the Fourier slice theorem and Helmholtz decompositions, but instead of the symmetrized gradient $d$ and divergence $\delta$ we use differential operators $H$ and $K$ natural for the elastic transform.

Restricting the elastic ray transform to only the polarization $q = v$ recovers only one operator, $\eXRT{2}_{v,v}$. This ray transform is the usual longitudinal X-ray transform of $4$-tensor field, but for tensor fields only enjoying the elastic symmetries rather than full symmetry. Hence $\eXRT{2}_{v,v}$ alone has two types of obstructions to invertibility; the kernel of the longitudinal transform and the fact that the longitudinal transform only sees the fully symmetric part of the tensor field.

The mixed $(k,l)$ ray transform of an $(k+l)$-tensor field $f$ is defined as
\begin{equation}
\label{eqn:mixed-ray-transform}
X_{M}^{k,l}f(x,v,q)
=
\int_\R
\ip{f(x+tv)}{v^{\otimes k} \otimes q^{\otimes l}}
\,dt
\end{equation}
where $q$ ranges over all vectors perpendicular to $v$. The tensor field $f$ is assumed to be fully symmetric in the first $k$ indices and fully symmetric in the last $l$ indices. We call this $(k,l)$ symmetry type. Tensor fields with $(k,l)$ symmetry type come with a Helmholtz decomposition special to their structure. 
The mixed ray transform is known to be solenoidally injective in the sense of the decomposition. For decompositions of tensor fields, and injectivity and range characterizations of the mixed ray transform see~\cite{dHSUZ2021,dHSZ2019,mishra2025inversion,Sharafutdinov1994,UZ2024a,UZ2024b}.

Restricting the elastic ray transform to only polarizations orthogonal to $v$ recovers the collection of operators $\eXRT{2}_{v,q}$ where $q \in v^\perp$. This collection entails the information of the mixed ray transform, but again for a $4$-tensor fields with elastic symmetries rather than the symmetries natural to the mixed ray transform. Thus this collection of transforms alone comes with two obstructions to uniqueness; the kernel of the mixed ray transform and the fact that the transform only sees the $(k,l)$ symmetric part of the tensor.

Although not directly related to the elastic ray transform, formula~\eqref{eqn:mixed-ray-transform} defines for $k = 0$ 
another interesting class of transforms; the transversal ray transform. The kernel of this ray transform on symmetric tensor fields depends on the dimension. In $2$ dimensions, the transform data reduces to the longitudinal X-ray transform and has non-trivial kernel. In higher, dimensions the kernel is trivial~\cite{Sharafutdinov1994}. One can even take the mixed ray transform as a collection of ray transforms $\{X_{M}^{k,l}\}_{k,l}$. In this case the ray transform has trivial kernel and can be inverted explicitly under certain geometric assumptions~\cite{KMM2019}.

We choose here the approach of formulating the ray transform as a family of operators to conveniently express the transforms as an operator between function spaces. For usage of X-ray transforms as a family of operators rather than a single one see \cite{Ilmavirta2015,IKR2020,KMM2019}.

The choice of the admissible polarization set $\polarizationset{v}{}$ is crucial for the model.
We use $\polarizationset{v}{}=v\R\cup v^\perp$ and we described above (and in more detail in Appendix~\ref{app:spt}) how $v^\perp$ is too large but is the minimal extension that makes the problem fully linear.
If we took $\polarizationset{v}{}=\R^n$, allowing any vector whatsoever to play the role of a polarization, the kernel of $\eXRT{2}$ would consist of only the image of $\eHesse_2$.
In that sense one may say that the operator $\eGrad$ is related polarization limitations.

\subsection{Effects of symmetry}
\label{sec:results-in-different-ranks}

Symmetry assumptions have an effect on the kernel of the inverse problem.
As discussed above, the natural symmetry assumptions for the longitudinal and mixed ray transforms are different from those arising from the physical problem.
Nothing prevents a ray transform from operating on less symmetric tensor fields than usually studied.

If we know that the perturbation enjoys a specific symmetry, the kernel may be different but still determined by the general theorem.
If, for example, the perturbation preserves isotropy (but not homogeneity), then the kernel is trivial as we will see next.
Denote the basic isotropic tensors by $\alpha_{ijkl}=\delta_{ij}\delta_{kl}$ and $\beta_{ijkl}=\delta_{ik}\delta_{jl}+\delta_{ij}\delta_{jk}$.
The two scalar fields (perturbations of Lam\'e parameters)
$\lambda$ and $\mu$ are determined by $\eXRT{2}(\lambda\alpha+\mu\beta)$, because for $q=v$ we have
$\eXRT{2}(\lambda\alpha+\mu\beta)=\eXRT{0}\lambda+2\eXRT{0}\mu$ and for $q\perp v$ we have $\eXRT{2}(\lambda\alpha+\mu\beta)=2\eXRT{0}\mu$.
The operator $\eXRT{0}$ is the familiar invertible Euclidean scalar X-ray transform.

\section{Results in different ranks}
\label{sec:ranks}

\subsection{General rank}
\label{sec:general-rank-proof}

We start with elastic tensor fields of general rank $m \geq 1$. Recall that in Section \ref{sec:introduction} we defined the operators
\begin{equation}
\begin{split}
&
\eHesse_m
\colon
\schwartz(\R^n;\etensor{n}{m-1})
\to
\schwartz(\R^n;\etensor{n}{m})
\\&
\eHesse_m h
=
\elasticsymmetrization(D^2h)
.
\end{split}
\end{equation}
The image of these operators is always in the kernel of the elastic X-ray transform. Here both the operator $\eHesse_m$ and $\eXRT{m}$ act on elastic tensor fields of Schwartz class.

\begin{proposition}
\label{prop:hesse-in-kernel-all-ranks}
Let $m \in \{0,1,2,\ldots\}$. If $\eHesse_m$ is as in \eqref{eq:Op_Hesse} then $\im(\eHesse_m)\subset\ker(\eXRT{m})$.
\end{proposition}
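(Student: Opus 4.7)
The plan is to rewrite the integrand of $\eXRT{m}_{v,q}(\eHesse_m h)$ as a total $t$-derivative of a Schwartz function along the line $t\mapsto x+tv$ and conclude by the fundamental theorem of calculus. The operator $\eHesse_m$ is not defined for $m=0$, so I assume $m\geq 1$ throughout.

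The first step is to move the elastic symmetrization off of $D^2h$. Since $\elasticsymmetrization$ is an averaging projection onto $\etensor{n}{m}$, it is self-adjoint with respect to the contraction pairing on $(\R^n)^{\otimes 2m}$, giving
\begin{equation}
\ip{\elasticsymmetrization(D^2 h)}{(v\otimes q)^{\otimes m}}
=
\ip{D^2 h}{\elasticsymmetrization((v\otimes q)^{\otimes m})}.
\end{equation}
Because the $m$ factors $v\otimes q$ are all identical, permuting the pairs has no effect; within-pair averaging replaces each factor by $v\sotimes q = \frac{1}{2}(v\otimes q + q\otimes v)$. Hence $\elasticsymmetrization((v\otimes q)^{\otimes m}) = (v\sotimes q)^{\otimes m}$.

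Next, the contraction of $D^2 h$ against $(v\sotimes q)^{\otimes m}$ decouples. One factor absorbs the two derivative indices of $D^2h$, yielding $(v\sotimes q)_{cd}\partial_c\partial_d = (v\cdot\nabla)(q\cdot\nabla)$ by symmetry of the Hessian; the remaining $m-1$ factors contract with $h$ to give $\ip{h}{(v\sotimes q)^{\otimes(m-1)}} = \ip{h}{(v\otimes q)^{\otimes(m-1)}}$, the equality holding because $h$ is elastic-symmetric. Setting $H := \ip{h}{(v\otimes q)^{\otimes(m-1)}} \in \schwartz(\R^n)$, one arrives at
\begin{equation}
\ip{\eHesse_m h}{(v\otimes q)^{\otimes m}} = (v\cdot\nabla)(q\cdot\nabla)\, H.
\end{equation}

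Along $t\mapsto x+tv$ the operator $v\cdot\nabla$ becomes $\der/\der t$, and it commutes with $q\cdot\nabla$, so the integrand equals $\frac{\der}{\der t}[(q\cdot\nabla H)(x+tv)]$. Since $q\cdot\nabla H$ is Schwartz and hence vanishes at infinity, the fundamental theorem of calculus gives $\eXRT{m}_{v,q}(\eHesse_m h)(x) = 0$ for every $x\in\R^n$, $v\in\sphere{n-1}$, and $q\in\polarizationset{v}{}$. The only substantive step is the algebraic identity $\elasticsymmetrization((v\otimes q)^{\otimes m}) = (v\sotimes q)^{\otimes m}$; once it is in hand, the rest reduces to commutativity of derivatives and Schwartz decay.
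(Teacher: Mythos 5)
Your proof is correct and follows essentially the same route as the paper's: both reduce the integrand of $\eXRT{m}_{v,q}(\eHesse_m h)$ to a total $t$-derivative of a Schwartz function and conclude by the fundamental theorem of calculus. The only difference is cosmetic --- you justify explicitly, via self-adjointness of $\elasticsymmetrization$ and the identity $\elasticsymmetrization((v\otimes q)^{\otimes m})=(v\sotimes q)^{\otimes m}$, the step $\eXRT{m}_{v,q}(\eHesse_m h)=\eXRT{m}_{v,q}(D^2h)$ that the paper's first line asserts without comment.
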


\begin{proof}
Let $h \in \schwartz(\R^n;\etensor{n}{m-1})$. Then for all $x \in \R^n$, $v \in \sphere{n-1}$ and $q \in \polarizationset{v}{}$ we have that
\begin{equation}
\label{eqn:hesse-always-in-kernel-proof}
\begin{split}
\eXRT{m}_{v,q}(\eHesse_mh)(x)
&=
\eXRT{m}_{v,q}(D^2h)(x)
\\
&=
\int_\R
\sum_{\substack{i,j,k_1,l_1,\dots,\\k_{m-1},l_{m-1}}}
\partial_i\partial_j
h_{k_1l_1\cdots k_{m-1}l_{m-1}}(x+tv)
v_iq_j v_{k_1}q_{l_1}
\cdots
v_{k_{m-1}}q_{l_{m-1}}
\,dt
\\
&=
\int_\R
\sum_{\substack{j,k_1,l_1,\dots,\\k_{m-1},l_{m-1}}}
\partial_t[\partial_j
h_{k_1l_1\cdots k_{m-1}l_{m-1}}(x+tv)
q_j v_{k_1}q_{l_1}
\cdots
v_{k_{m-1}}q_{l_{m-1}}]
\,dt
\\
&=
0.
\end{split}
\end{equation}
The last equality in \eqref{eqn:hesse-always-in-kernel-proof} holds since $h \in \schwartz(\R^n;\etensor{n}{m-1})$. Thus $\eXRT{m}(\eHesse_mh) = 0$ for all $h \in \schwartz(\R^n;\etensor{n}{m-1})$ as claimed.
\end{proof}

The natural question is whether this is the whole kernel. We prove that this is the case when $m = 1$. Later in this section we will see that $\eHesse_m$ does not describe the entire kernel for general $m\geq 1$. In particular, we show that the kernel is larger than $\im(\eHesse_2)$ for the rank $m=2$.

\subsection{Rank 0}

Rank $0$ elastic tensor fields are just scalar fields on $\R^n$. For example
\begin{equation}
\schwartz(\R^n;\etensor{n}{0})
=
\schwartz(\R^n).
\end{equation}
In this case the elastic X-ray transform reduces to the usual scalar X-ray transform by definition. Injectivity of this transform is classical (see \cite{Helgason2011,Sharafutdinov1994}).

\subsection{Rank 1}
\label{sec:proof-for-rank-1}

We give two different proofs of ``solenoidal injectivity" using different techniques.
The first proof (Theorem \ref{thm:ims-and-kers-2}) is a cohomological argument using the Poincare lemma and the symmetries of elastic tensor fields.
The second proof (Theorem \ref{thm:ims-and-kers-1}) is based on the Fourier slice theorem, which gives information on the Fourier transform tested against polarization vectors.

For this section, let us denote $\eHesse\coloneqq\eHesse_1$.

\begin{proof}[Proof of Theorem~\ref{thm:ker-characterization-1}]
The inclusion $\im(\eHesse)\subset\ker(\eXRT{1})$ comes from Proposition~\ref{prop:hesse-in-kernel-all-ranks}.
Let us thus take $f\in C_c^\infty(\R^n;\etensor{n}{1})\cap\ker(\eXRT{1})$,
and note that $f$ is fully symmetric.
Our aim is to find a potential $h$ as in the claim of this theorem. 

Suppose first that $n=2$.
The field $f$ is supported in some ball $B$.
Let $x\in\R^2$ and $v\in\sphere{1}$.
For all $h>0$ construct a rectangle $A_h^{x,v}$ so that its boundary consists of a segment of $\gamma_{x,v}$ (the line starting from $x$ with direction $v$), a parallel but reversed copy $\gamma_{x,v}^h=\gamma_{y,-v}$ of this segment shifted by distance $h$, and two line segments of length $h$ connecting these.
The two short segments are chosen to be outside the ball $B$.

For $q\in\polarizationset{v}{}$ denote by $qf$ the one-form with components $(qf)_j=\sum_iq_if_{ij}$.
Changing integration paths outside the support of $f$ and using the fact that $A^{x,v}_h$ collapses to $\gamma_{x,v}$ as $h\to0$, we find via Stokes' theorem that
\begin{equation}
\begin{split}
0
&=
\frac1h
[
\eXRT{1}_{v,q}f(y)
-
\eXRT{1}_{v,q}f(x)
]
\\&=
-\frac1h
\int_{\partial A^{x,v}_h}qf
\\&=
\frac1h
\int_{A^{x,v}_h}\der(qf)
\\&=
\frac1h
\int_{z\in A^{x,v}_h}\star\der(qf)\der z
,
\end{split}
\end{equation}
where $\star$ is the Hodge star that takes 2-forms to 0-forms.
When $h\to0$, the rectangle $A^{x,v}_h$ collapses to the line segment $\gamma_{x,v}$ and we obtain
\begin{equation}
\XRT(\star\der(qf))(\gamma_{x,v})=0,
\end{equation}
where $\XRT$ is the usual planar X-ray transform of scalar functions.

This condition is linear in $q\in\polarizationset{v}{}$ and $\polarizationset{v}{}$ spans the whole space, so we may in fact take $q=e_i$ even though the basis vector $e_i$ might not be in $\polarizationset{v}{}$.
Therefore the one-forms $\omega_i=\sum_{j}f_{ij}\der x^j$ 
satisfy
\begin{equation}
\XRT(\star\der\omega_i)
=
0
.
\end{equation}
Hence, the injectivity of $\XRT$ and $\star$ give that $\der\omega_i=0$.
The Poincar\'e lemma gives $\omega_i=\der w_i$ for some scalar functions $w_i$.

In terms of $f$ this conclusion means that $f_{ij}=\partial_jw_i$.
The symmetry of $f$ gives that $\partial_jw_i=\partial_iw_j$, which means that $w\coloneqq\sum_i w_i\der x^i$ is a closed one-form.
Thus, a second application of the Poincaré lemma gives that $w_i=\partial_ih$ for some scalar function $h\in C_c^\infty(\R^2)$. We conclude that
\begin{equation}
f_{ij}
=
\partial_i\partial_jh = (Hh)_{ij}
\end{equation}
as desired.
In this proof we used a version of the Poincaré lemma for compactly supported differential forms (see~\cite[Corollary 8.3.17]{Conlon2008}). Thus compactness of the support of $h$ is inherited from that of $f$.

Suppose then that $n\geq3$.
The argument above gives that when $f$ is restricted to (the tangent bundle of) any 2-plane $P\subset\R^n$, then there is a compactly supported smooth potential $h_P\colon P\to\C$ so that $f|_P=\eHesse_Ph_P$, where the Hessian is taken along $P$.
We will show that when two 2-planes $P_i$, $i=1,2$, intersect, then $h_{P_1}=h_{P_2}$ on the intersection.
This implies that there is a global function $h\in C_c^\infty(\R^n)$ such that $f=Hh$ and for which $h_P=h|_P$ for each 2-plane $P$.

Consider first the case when $P_1$ and $P_2$ intersect along a line $L$.
The two functions of interest are $h_i\coloneqq h_{P_i}|_L \in C_c^\infty(\R)$.
As both of these two functions are compactly supported, and they have the same second derivative (the quadratic form $f$ evaluated in the direction of $L$), so by the fundamental theorem of calculus they are the same function.

Consider then the case when $P_1\cap P_2=\{x\}$.
Take any lines $L_i\subset P_i$ through $x$ and let $P_3$ be the 2-plane containing the lines $L_1$ and $L_2$.
As $P_1$ and $P_3$ intersect along the line $L_1$, the previous argument shows that $h_{P_1}|_{L_1}=h_{P_3}|_{L_1}$.
Similarly, $h_{P_2}|_{L_2}=h_{P_3}|_{L_2}$.
As $x$ is the intersection point of $L_1$ and $L_2$, we find $h_{P_1}(x)=h_{P_3}(x)=h_{P_2}(x)$ as desired.
\end{proof}

One typically expects that a tensor in the kernel of a ray transform is a first-order derivative of a lower-order field in some manner.
The role of this potential is played by the field $w$ in the proof above.
The symmetry of $f$ and the way that the elastic ray transform is defined forces $w$ to be closed, and a cohomological argument then shows that $w$ is itself a derivative, making $f$ a second-order derivative of a potential $h$.

The formal $L^2$ adjoint of $\eHesse$ is the double divergence $\eHesse^* \colon \schwartz(\R^n;\etensor{n}{1}) \to \schwartz(\R^n)$ defined by
\begin{equation}
\eHesse^*f
=
\sum_{i,j}
\partial_i\partial_j f_{ij}.
\end{equation}
In the following theorem the operators $\eHesse$ and $\eXRT{1}$ act on elastic $1$-tensor fields of Schwartz class.

\begin{proposition}
\label{thm:ims-and-kers-1}
For all $n\geq1$ the following hold:
\begin{enumerate}
\item\label{claim:ker1-vector}
$\im(\eHesse)\subset\ker(\eXRT{1})$.
\item\label{claim:ker2-vector}
$\ker(\eHesse^*)\cap\ker(\eXRT{1})=0$.
\end{enumerate}
\end{proposition}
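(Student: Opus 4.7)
The first inclusion $\im(\eHesse)\subset\ker(\eXRT{1})$ is the case $m=1$ of Proposition~\ref{prop:hesse-in-kernel-all-ranks}, so nothing new is needed there. The interesting content is part~\eqref{claim:ker2-vector}, and my plan is to prove it via Fourier analysis, as advertised before the statement. An elastic $1$-tensor field is just a symmetric $2$-tensor field on $\R^n$, so I think of $\hat f(p)$ as a symmetric $n\times n$ matrix depending on $p\in\R^n$.

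First I would translate both conditions to Fourier side. By the Fourier slice theorem applied to $\eXRT{1}_{v,q}f=0$, I get that
\begin{equation}
\sum_{i,j}\hat f_{ij}(p)v_iq_j=0\quad\text{whenever }p\perp v\text{ and }q\in\polarizationset{v}{}.
\end{equation}
The condition $\eHesse^*f=0$ translates to $\sum_{i,j}p_ip_j\hat f_{ij}(p)=0$ for every $p\in\R^n$.

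Next I would exploit the two pieces $\R v$ and $v^\perp$ of $\polarizationset{v}{}$ separately. Fix $p\neq0$ and an arbitrary $v\in p^\perp$. Testing with $q=v$ gives $\hat f(p)(v,v)=0$. Testing with $q$ running over $v^\perp$ gives that $\hat f(p)v$, viewed as a vector, is orthogonal to every element of $v^\perp$, so $\hat f(p)v\in\R v$; combined with $\hat f(p)(v,v)=0$ this forces $\hat f(p)v=0$. Since this holds for every $v\in p^\perp$, the symmetric matrix $\hat f(p)$ annihilates the hyperplane $p^\perp$, so its range lies in $(p^\perp)^\perp=\R p$ and it must have the form $\hat f(p)=\lambda(p)\,p\otimes p$ for some scalar $\lambda(p)$.

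Finally I would plug this into the $\ker(\eHesse^*)$ condition. Evaluating $\sum_{i,j}p_ip_j\hat f_{ij}(p)=\lambda(p)|p|^4=0$ forces $\lambda(p)=0$ for every $p\neq0$, hence $\hat f\equiv0$ off the origin, hence $\hat f\equiv0$ by continuity for Schwartz class, and therefore $f=0$. The only subtle step is the algebraic deduction that $\hat f(p)$ kills $p^\perp$, and that really is the main obstacle: it is what uses the full strength of admitting \emph{both} longitudinal ($q=v$) and transverse ($q\perp v$) polarizations in $\polarizationset{v}{}$; if either half of $\polarizationset{v}{}$ were dropped, one could no longer conclude $\hat f(p)\in\R\,p\otimes p$ and the argument would break.
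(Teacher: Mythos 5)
Your proposal is correct and follows essentially the same route as the paper: Fourier slice theorem, the observation that $\polarizationset{v}{}$ spans $\R^n$ to conclude $\hat f(p)v=0$ for all $v\perp p$ and hence $\hat f(p)=\lambda(p)\,pp^T$, and then the condition $\eHesse^*f=0$ to kill the scalar $\lambda(p)$. The only cosmetic difference is that you split the polarization set into its longitudinal and transverse pieces before concluding $\hat f(p)v=0$, whereas the paper invokes in one step that $\polarizationset{v}{}$ contains a basis.
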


\begin{proof}
Claim~\ref{claim:ker1-vector}: The claim is a special case of Proposition~\ref{prop:hesse-in-kernel-all-ranks} ($m=1$).

Claim~\ref{claim:ker2-vector}: Since $\eXRT{1}f = 0$, given $p \ne 0$, it follows from the Fourier slice theorem that
\begin{equation}
\sum_{i,j}
\hat f_{ij}(p)v_iq_j
=
0
\end{equation}
for all $v \in p^\perp$ and $q \in \polarizationset{v}{}$. This can be written as $q^T\hat f(p) v = 0$ for all $v \in p^\perp$ where we have fixed $p$ and interpreted the Fourier transform of $f$ evaluated at $p$ as a matrix. Since $\polarizationset{v}{}$ contains a basis for $\R^n$ we get $\hat f(p)v = 0$ for all $v \in p^\perp$. Therefore $\hat f(p) = \hat h(p)pp^T$, where $\hat h(p) = \abs{p}^{-4}p^T\hat f(p)p \in \R$.
Thus since $\eHesse^*f = 0$ we get that
\begin{equation}
0
=
-
\sum_{i,j}
p_ip_j\hat f_{ij}(p)
=
-\hat h(p)
\abs{p}^4.
\end{equation}
Then we have shown that $\hat{f}_{ij}(p) = \hat h(p)p_ip_j = 0$ for all $i$ and $j$, which proves that $f = 0$ since $f \in \schwartz(\R^n;\etensor{n}{1})$.
\end{proof}

\begin{remark}
We could pair Theorem \ref{thm:ims-and-kers-1} with a suitable Helmholtz type decomposition to give a second proof that $\ker(\eXRT{1}) = \im(\eHesse)$. However, we do not pursue this in the rank $1$ case.
\end{remark}

\subsection{Rank 2}
\label{sec:proof-for-rank-2}

In this section, we prove our main result (Theorem \ref{thm:ims-and-kers-2}) on the elastic X-ray transform of elastic tensor fields of rank $2$.
Recall that in Section \ref{sec:introduction} we defined the operators
\begin{equation}
\begin{split}
&
\eHesse_2
\colon
\schwartz(\R^n;\etensor{n}{1})
\to
\schwartz(\R^n;\etensor{n}{2})
\\&
\eHesse_2 h
=
\elasticsymmetrization(D^2h)
\end{split}
\end{equation}
and
\begin{equation}
\begin{split}
&
\eGrad
\colon
\schwartz(\R^n;\R^n)
\to
\schwartz(\R^n;\etensor{n}{2})
\\&
\eGrad W
=
\elasticsymmetrization(DW\otimes\idmatrix)
.
\end{split}
\end{equation}
In this section we denote $\eHesse \coloneqq \eHesse_2$. The formal $L^2$ adjoints of the operators are
\begin{equation}
\begin{split}
&
\eHesse^* f
\colon
\schwartz(\R^n;\etensor{n}{2})
\to
\schwartz(\R^n;\etensor{n}{1})
\\&
(\eHesse^*f)_{ij}
=
\sum_{k,l}\partial_k\partial_l f_{ijkl}
\end{split}
\end{equation}
and
\begin{equation}
\begin{split}
&
\eGrad^*
\colon
\schwartz(\R^n;\etensor{n}{2})
\to
\schwartz(\R^n;\R^n)
\\&
(\eGrad^* f)_i
=
-\sum_{j,k} \partial_jf_{ijkk}
.
\end{split}
\end{equation}
Thus, for all $f \in \schwartz(\R^n;\etensor{n}{2})$, $h \in \schwartz(\R^n;\etensor{n}{1})$ and $W \in \schwartz(\R^n;\R^n)$ we have that
\[
\iip{\eHesse h}{f}_{L^2} = \iip{h}{\eHesse^* f}_{L^2} 
\text{ and } 
\iip{\eGrad W}{f}_{L^2} = \iip{h}{\eGrad^* f}_{L^2}.
\]

Before proving Theorem~\ref{thm:ims-and-kers-2} we present three lemmas.
Lemma \ref{lma:fst} is an adaptation of the Fourier slice theorem to the elastic X-ray transform.
Lemma \ref{lma:no-pw-kernel} shows that in the conclusion of the Fourier slice theorem a non-trivial tensor field cannot vanish pointwise. Lemma \ref{lma:reduction} manipulates some rather technical conditions encountered in the proof of Theorem \ref{thm:ims-and-kers-2} into a more manageable form.

\begin{lemma}[Fourier slice theorem]
\label{lma:fst}
The following are equivalent for $f\in\schwartz(\R^n;\etensor{n}{2})$:
\begin{enumerate}
\item $\eXRT{2}f=0$.
\item $\hat f(0)=0$ and for all $p\neq0$ we have that
\begin{equation}
\sum_{i,j,k,l}\hat f_{ijkl}(p)v_iq_jv_kq_l
=
0
\end{equation}
whenever $v\perp p$ and $q\in\polarizationset{v}{}$.
\end{enumerate}
\end{lemma}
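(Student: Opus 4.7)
My plan is to reduce the elastic X-ray transform to the scalar X-ray transform by contracting $f$ against the polarization tensor $(v\otimes q)^{\otimes 2}$, then invoke the classical scalar Fourier slice theorem. Fix $v\in\sphere{n-1}$ and $q\in\polarizationset{v}{}$ and set
\[
g_{v,q}(x) = \sum_{i,j,k,l} f_{ijkl}(x)\, v_iq_jv_kq_l.
\]
Then $g_{v,q}\in\schwartz(\R^n)$, and by definition $\eXRT{2}_{v,q}f(x) = \int_\R g_{v,q}(x+tv)\,\der t$ is precisely the scalar X-ray transform of $g_{v,q}$ along lines of direction $v$. The standard Fourier slice theorem then gives, up to a convention-dependent constant,
\[
\widehat{\eXRT{2}_{v,q}f}(p) = \hat g_{v,q}(p) = \sum_{i,j,k,l} \hat f_{ijkl}(p)\, v_iq_jv_kq_l \quad\text{for all } p\in v^\perp.
\]

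For the direction $(1)\Rightarrow(2)$, I would assume $\eXRT{2}f=0$. The identity above at $p\neq0$ gives the desired pointwise condition on $\hat f(p)$ immediately. At $p=0$ the identity reads
\[
0 = \int_{v^\perp} \eXRT{2}_{v,q}f(y)\,\der y = \bigl\langle \hat f(0),(v\otimes q)^{\otimes 2}\bigr\rangle
\]
(again up to a constant) for every $v\in\sphere{n-1}$ and $q\in\polarizationset{v}{}$. To upgrade this to the full tensor identity $\hat f(0)=0$ I would invoke the algebraic fact that the polarization tensors $(v\otimes q)^{\otimes 2}$ span $\etensor{n}{2}$ as $v$ and $q\in\polarizationset{v}{}$ vary; this is essentially the content of Lemma~\ref{lma:no-pw-kernel} at the origin, where the constraint $v\perp p$ is vacuous and one has the full unit sphere of admissible directions.

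The converse $(2)\Rightarrow(1)$ should be immediate from the same Fourier slice identity: the hypotheses ensure that $\widehat{\eXRT{2}_{v,q}f}(p)$ vanishes at $p=0$ (by $\hat f(0)=0$) and at every $p\in v^\perp\setminus\{0\}$ (by the contracted vanishing). Fourier inversion on $v^\perp$ then yields $\eXRT{2}_{v,q}f\equiv 0$, and since this holds for every $v$ and every $q\in\polarizationset{v}{}$ we conclude $\eXRT{2}f=0$.

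The main obstacle is the treatment of $p=0$: the Fourier slice identity alone only delivers contracted vanishing there, whereas condition~(2) demands the full tensor identity $\hat f(0)=0$. Bridging this gap is a purely algebraic statement about elastic 2-tensors and the polarization set $\polarizationset{v}{}$, and it is where the rank-$2$ elastic geometry genuinely enters; all other steps are direct applications of the classical one-dimensional Fourier transform.
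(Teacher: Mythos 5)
Your argument is correct and is exactly the route the paper intends: the authors omit the proof, saying only that it is ``immediate upon taking the Fourier transform,'' which is precisely your reduction to the scalar Fourier slice theorem applied to the contracted scalars $g_{v,q}$. Your observation that upgrading the contracted vanishing at the origin to the full tensor identity $\hat f(0)=0$ requires the algebraic spanning fact of Lemma~\ref{lma:no-pw-kernel} (proved independently in Section~\ref{sec:cas-proofs}, so there is no circularity) is the one genuinely non-trivial detail, and you have identified it accurately.
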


The proof of Lemma~\ref{lma:fst} is  immediate upon taking the Fourier transform and we omit the details in here.

\begin{lemma}[No pointwise kernel; proven in Section~\ref{sec:cas-proofs}]
\label{lma:no-pw-kernel}
Let $n\geq1$.
Let $a\in\etensor{n}{2}$ and suppose that for all $v\in\sphere{n-1}$ we have that
\begin{equation}
\label{eq:lma:no-pw-kernel}
\sum_{i,j,k,l}
a_{ijkl}v_iq_jv_kq_l=0,
\end{equation}
whenever $q\in\polarizationset{v}{}$.
Then $a=0$.
\end{lemma}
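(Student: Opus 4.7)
The approach is to split the polarization set $\polarizationset{v}{}=\R v\cup v^\perp$ into its two pieces, extract one algebraic identity from each, and combine them with the elastic symmetries $a_{ijkl}=a_{jikl}=a_{klij}$ (which also imply $a_{ijkl}=a_{ijlk}$) to force $a=0$.

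First I would take $q=v$ in \eqref{eq:lma:no-pw-kernel}, which is permitted since $v\in\R v\subset\polarizationset{v}{}$. This gives $\sum a_{ijkl}v_iv_jv_kv_l=0$ for all $v\in\R^n$, equivalent to the vanishing of the fully symmetric part of $a$. The elastic symmetries form an order-$8$ subgroup of $S_4$ with three cosets, so the full symmetrization collapses to the Bianchi-type identity
\begin{equation}
a_{ijkl}+a_{ikjl}+a_{iljk}=0. \tag{B}
\end{equation}

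Next I would take $q\in v^\perp$ and study the bi-quadratic polynomial $A(v,q)=\sum a_{ijkl}v_iq_jv_kq_l$. The hypothesis yields $A(v,q)=0$ whenever $v\cdot q=0$. For $n\geq 2$ the polynomial $v\cdot q\in\C[v,q]$ is irreducible, so Hilbert's Nullstellensatz together with bi-degree bookkeeping produces a factorization $A(v,q)=(v\cdot q)L(v,q)$ with $L$ bilinear. The elastic symmetries yield $A(v,q)=A(q,v)$, so $L$ is symmetric; and (B) forces $A(v,v)=|v|^2 L(v,v)=0$ for every $v$, so $L$ vanishes on the diagonal and, being symmetric bilinear, is identically zero. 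Hence $A\equiv 0$ as a polynomial in $(v,q)$. (The $n=1$ case is trivial from the first step alone since $v^\perp=\{0\}$.)

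Finally, reading off the coefficient of each monomial $v_pv_rq_sq_t$ in the polynomial identity $A\equiv 0$ and using the pair-swap symmetry $a_{klij}=a_{ijkl}$ to collapse the four resulting contributions yields $a_{ijkl}+a_{ilkj}=0$. The second-pair symmetry rewrites this as $a_{iljk}=-a_{ijkl}$, and substituting into (B) gives $a_{ikjl}=0$ for all indices, which after relabeling is $a=0$. The step that requires the most care is the polynomial-divisibility argument, where the pointwise vanishing of $A$ on the hyperplane $\{v\cdot q=0\}$ must be promoted to divisibility of $A$ by $v\cdot q$ in the polynomial ring; this rests on the irreducibility of $v\cdot q$ (which needs $n\geq 2$), after which the remainder of the proof is pure combinatorial bookkeeping with the elastic index symmetries.
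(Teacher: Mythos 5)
Your proof is correct, but it takes a genuinely different route from the paper. The paper's proof is computer-assisted: it first reduces to dimensions $n\leq 4$ by restricting to a four-dimensional subspace containing any chosen quadruple of basis directions, then parametrizes $v$ and $q\in\polarizationset{v}{}$ polynomially (projectively) and has Maxima extract and solve the resulting finite linear system for $a$. You instead give a uniform, dimension-independent hand proof: the $q=v$ data kills the full symmetrization of $a$ (your identity (B)), the $q\perp v$ data says the bidegree-$(2,2)$ polynomial $A(v,q)$ vanishes on the irreducible quadric $\{v\cdot q=0\}$ and hence factors as $(v\cdot q)L(v,q)$ with $L$ bilinear, the cross-symmetry $A(v,q)=A(q,v)$ (a consequence of the elastic symmetries) makes $L$ symmetric, and $A(v,v)=0$ then forces $L=0$; coefficient extraction plus the pair-swap symmetry gives $a_{ijkl}+a_{ilkj}=0$, which combined with (B) yields $a=0$. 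I checked the index bookkeeping and it is right. What your approach buys is a transparent, machine-free argument valid for all $n\geq 2$ at once, which also explains \emph{why} the kernel is trivial (the two polarization branches interact through the symmetry $A(v,q)=A(q,v)$); what the paper's approach buys is a purely mechanical scheme that transfers with no extra ideas to the messier Lemma~\ref{lma:reduction}, where the analogous hand computation would be considerably more painful. The one step in your write-up that deserves a more careful sentence is the divisibility: the hypothesis gives vanishing only on the \emph{real} points of $\{v\cdot q=0\}$, so invoking Hilbert's Nullstellensatz alone is not quite enough; you need that the real zero set of the irreducible polynomial $v\cdot q$ is Zariski dense in its complex zero locus (which holds because it has a nonsingular real point, e.g. $(e_1,e_2)$), or alternatively a direct argument fixing $v$ and dividing the quadratic form $q\mapsto A(v,q)$ by the linear form $q\mapsto v\cdot q$. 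You correctly flag this as the delicate step and identify the needed irreducibility (which indeed requires $n\geq 2$, with $n=1$ handled separately as you do), so the argument stands.
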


\begin{lemma}[Proven in Section \ref{sec:cas-proofs}]
\label{lma:reduction}
Let $n \geq 1$ and let $a \in \etensor{n}{2}$. Let $p = (1,0,\dots,0)\in\R^n$. Suppose that $a_{11ij}(p) = 0$ for all $i,j$. Then the following systems of linear equations are equivalent:
\begin{enumerate}
    \item For all $v\in\sphere{n-1}\cap p^\perp$ we have that
    \begin{equation}
    \label{eq:vv1}
    \sum_{i,j,k,l}a_{ijkl}(p)v_iv_jv_kv_l=0 
    \end{equation}
    and
    \begin{equation}
    \label{eq:vv2}
    \sum_{i,j,k,l}
    a_{ijkl}(p)v_iq_jv_kq_l=0
    \quad\text{for all }q\perp v
    .
    \end{equation}

    \item For all indices $i,j,k,l\geq2$ we have that
    \begin{equation}
    \label{eqn:reduction1}
    \begin{cases}
    a_{ijkl}(p)=0
    \\
    a_{1i1j}(p)=0
    \\
    a_{1ijk}(p)=0 \text{ when }j\neq k
    \\
    a_{1i22}(p)=a_{1i33}(p)=\dots=a_{1inn}(p)
    .
    \end{cases}
    \end{equation}
\end{enumerate}
\end{lemma}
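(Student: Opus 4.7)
The plan is to prove the two implications separately. For (2) $\Rightarrow$ (1), direct substitution suffices. Since $v \perp p$ means $v_1 = 0$, every term in \eqref{eq:vv1} uses only indices in $\{2,\dots,n\}$, and these vanish by the first line of (2). For \eqref{eq:vv2}, expand the sum and split each of $j, l$ into ``$1$'' and ``$\geq 2$''. The elastic symmetries identify $a_{\alpha 1 \gamma \beta}$ with $a_{1\alpha\beta\gamma}$; after using the constraints from (2) only the constants $c_{1\alpha} := a_{1\alpha\beta\beta}(p)$ contribute, and the remaining expression collects to a multiple of $q_1\,(v \cdot q)$, which vanishes by orthogonality.

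For the harder direction (1) $\Rightarrow$ (2), I first polarize \eqref{eq:vv2} in $q$: replacing $q$ by $q + tq'$ with $q, q' \perp v$ produces the bilinear identity $q^T A(v) q' = 0$ for all $q, q' \perp v$, where
\begin{equation*}
A(v)_{jl} \coloneqq \sum_{i,k} a_{ijkl}(p)\, v_i v_k.
\end{equation*}
The matrix $A(v)$ is symmetric thanks to the pair-swap symmetry of $a$, so the bilinear identity forces $A(v)\,v^\perp \subset \R v$. Combined with \eqref{eq:vv1} (which gives $v^T A(v) v = 0$), this yields the structural identity
\begin{equation*}
A(v) = v\, w(v)^T + w(v)\, v^T, \quad w(v) \perp v,
\end{equation*}
valid for every unit $v \perp p$.

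Next I read off constraints by examining $A(v)$ entry by entry in the splitting $\{1\}\cup\{2,\dots,n\}$, using $v_1 = 0$. The $(1,1)$ entry reduces to $\sum_{\alpha,\beta \geq 2} a_{1\alpha 1\beta}(p)\, v_\alpha v_\beta \equiv 0$ in $\tilde v = (v_2, \ldots, v_n)$; since the underlying matrix is symmetric, this yields $a_{1\alpha 1\beta}(p) = 0$. The $(1,\beta)$ entry satisfies $A(v)_{1\beta} = w_1(v) v_\beta$, i.e.\ the quadratic polynomial $\sum_{\alpha,\gamma \geq 2} a_{1\alpha\beta\gamma}(p)\, v_\alpha v_\gamma$ is divisible by $v_\beta$ with the same quotient $w_1(v)$ for every $\beta \geq 2$. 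Unpacking this polynomial divisibility condition produces the last two lines of (2): $a_{1\alpha\beta\gamma}(p) = 0$ whenever $\beta \neq \gamma$, and $a_{1\alpha\beta\beta}(p)$ is independent of $\beta \geq 2$.

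The main obstacle is the first line of (2), extracted from the remaining $(\beta,\delta)$ entries with $\beta,\delta \geq 2$. These must equal $v_\beta w_\delta(v) + v_\delta w_\beta(v)$ with $\tilde w(v) \perp \tilde v$, while \eqref{eq:vv1} polarized in $v$ forces the fully symmetric part of $a_{\alpha\beta\gamma\delta}(p)$ (restricted to indices $\geq 2$) to vanish. Testing at $v = e_\alpha$ and $v = e_\alpha + e_\gamma$ for $\alpha \neq \gamma$ in $\{2, \ldots, n\}$ yields relations such as $a_{\alpha\beta\gamma\delta}(p) + a_{\gamma\beta\alpha\delta}(p) = 0$ whenever $\beta, \delta \notin \{\alpha,\gamma\}$; combined with the vanishing of the fully symmetric part and with the pair-swap symmetry, these antisymmetry relations close up to force $a_{\alpha\beta\gamma\delta}(p) = 0$ throughout the spatial block. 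The bookkeeping is combinatorial and scales with $n$, which is why the paper carries out the final linear-algebra verification with computer algebra in Section~\ref{sec:cas-proofs}.
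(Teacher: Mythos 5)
Your route is genuinely different from the paper's: the paper parametrizes $v$ and $q$ by polynomials and has Maxima extract and solve the resulting linear system in the components of $a$, whereas you polarize \eqref{eq:vv2} into the structural identity $A(v)=v\,w(v)^T+w(v)\,v^T$ and read off entries by hand. The reduction to that identity is correct ($A(v)$ is symmetric by the pair-swap symmetry, so vanishing of the quadratic form on $v^\perp$ and on $\R v$ gives exactly this normal form), the direction (2)$\Rightarrow$(1) is fine, and the $(1,1)$ entry does give $a_{1i1j}(p)=0$. For the $(1,\beta)$ entries, note that the quadratic form $\sum_{\alpha,\gamma}a_{1\alpha\beta\gamma}v_\alpha v_\gamma$ only sees the combinations $\tfrac12\bigl(a_{1\alpha\beta\gamma}+a_{1\gamma\beta\alpha}\bigr)$, and for distinct $\alpha,\beta,\gamma$ these two components lie in different orbits of the elastic symmetry group; extracting the individual vanishings in line 3 therefore requires a short chain of identities across different values of the middle index (the diagonal entries give $a_{1\alpha\alpha\beta}=0$ for $\alpha\neq\beta$ directly, while the all-distinct case needs three symmetrized relations combined with the slot-$3$-$4$ swap). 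This does work out, and only then does line 4 follow, but it is precisely the bookkeeping your sketch elides.

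The genuine gap is the first line of (2). You derive relations such as $a_{\alpha\beta\gamma\delta}(p)+a_{\gamma\beta\alpha\delta}(p)=0$ for $\beta,\delta\notin\{\alpha,\gamma\}$ and then assert that, together with the vanishing of the fully symmetric part and the pair-swap symmetry, these ``close up'' to force $a_{\alpha\beta\gamma\delta}(p)=0$. That closing-up is the hardest and most error-prone step of the lemma --- it is exactly what the paper delegates to Maxima --- so asserting it is not a proof. A clean way to fill the hole without redoing the combinatorics: restricting \eqref{eq:vv1} and \eqref{eq:vv2} to polarizations with $q_1=0$ shows that the block $(a_{ijkl}(p))_{i,j,k,l\geq2}$, regarded as an elastic $2$-tensor over $p^\perp\cong\R^{n-1}$, satisfies precisely the hypotheses of Lemma~\ref{lma:no-pw-kernel} in dimension $n-1$, hence vanishes. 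With that substitution your argument becomes a complete and rather more conceptual proof, at the cost of being conditional on Lemma~\ref{lma:no-pw-kernel}, which is itself computer-verified.
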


\begin{proof}[Proof of Theorem~\ref{thm:ims-and-kers-2}]
Claim~\ref{claim:ker1}: The inclusion $\im(\eHesse) \subset \ker(\eXRT{2})$ is a special case of Proposition~\ref{prop:hesse-in-kernel-all-ranks} ($m = 2$). To prove the inclusion $\im(\eGrad) \subset \ker(\eXRT{2})$ let $W \in \schwartz(\R^n;\R^n)$. Then
\begin{equation}
\label{eqn:k-in-kernel}
\begin{split}
\eXRT{2}_{v,q}(\eGrad W)(x)
=
\int_\R
\sum_{i,j,k,l}
\frac12
[
\partial_iW_{j}(x+tv)
+\partial_jW_{i}(x+tv)
]
v_iq_j\ip{v}{q}\der t
.
\end{split}
\end{equation}
Recall that we have either $\ip{v}{q}=0$ or $q=v$. Hence the integral above either vanishes or the integrand is of the form
\begin{equation}
\partial_t[W_{i}(x+tv)v_i].
\end{equation}
Therefore, also in the latter case the integral vanishes due to the fundamental theorem of calculus and since $W$ is in the Schwartz class.

Claim~\ref{claim:ker2}:
Suppose $f\in\schwartz(\R^n;\etensor{n}{2})$ is in $\ker(\eHesse^*)\cap\ker(\eGrad^*)\cap\ker(\eXRT{2})$.
We aim to show that $\hat f(p)=0$ for all $p\in\R^n$.
For $p=0$ this is contained in lemma~\ref{lma:fst}, so we take $p\neq0$.
The kernels of the operators involved are invariant under scaling and rotation, so it suffices to prove the statement for $p=(1,0,\dots,0)$.

We have
\begin{equation}
\label{eq:vv3}
\begin{split}
0
&=
\widehat{\eHesse^*f}_{ij}(p)
\\&=
-\sum_{k,l}\hat f_{ijkl}(p)p_kp_l
\\&=
-\hat f_{ij11}(p)
\end{split}
\end{equation}
and
\begin{equation}
\label{eq:vv4}
\begin{split}
0
&=
\widehat{\eGrad^*f}_{i}(p)
\\&=
i\sum_{j,k}\hat f_{ijkk}(p)p_j
\\&=
i\sum_{k}\hat f_{i1kk}(p)
.
\end{split}
\end{equation}
On the other hand, for all $v\in\sphere{n-1}\cap p^\perp$ lemma~\ref{lma:fst} gives
\begin{equation}
\label{eq:vv5}
\sum_{i,j,k,l}\hat f_{ijkl}(p)v_iv_jv_kv_l=0
\end{equation}
and
\begin{equation}
\label{eq:vv6}
\sum_{i,j,k,l}
\hat f_{ijkl}(p)v_iq_jv_kq_l=0
\quad\text{for all }q\perp v
.
\end{equation}
Conditions~\eqref{eq:vv3}, ~\eqref{eq:vv5}, and~\eqref{eq:vv6} imply the equations \eqref{eq:vv1} and \eqref{eq:vv2} of Lemma~\ref{lma:reduction} when $a=\hat f$. Therefore, we also have that
\begin{equation}
\begin{cases}
\hat f_{ijkl}(p)=0
\\
\hat f_{1i1j}(p)=0
\\
\hat f_{1ijk}(p)=0 \text{ when }j\neq k
\\
\hat f_{1i22}(p)=\hat f_{1i33}(p)=\dots=\hat f_{1inn}(p)
\end{cases}
\end{equation}
for all indices $i,j,k,l\geq2$. Combining these with the conditions obtained in~\eqref{eq:vv3} and~\eqref{eq:vv4} wee see that $\hat f_{ijkl}(p)$ indeed vanish for all possible values of $i,j,k,l\in \{1,2,\ldots,n\}$.
\end{proof}

\section{The Helmholtz decomposition of rank 2 elastic tensor fields}
\label{sec:helmholtz}

In the previous section we proved that the elastic X-ray transform $\eXRT{2}$ is injective on solenoidal fields, i.e. fields $S \in \schwartz(\R^n;\etensor{n}{2})$ with $\eHesse_2^*S = \eGrad^*S = 0$, and that potentials, i.e. fields of the form $\eHesse_2 h + \eGrad W$ where $W \in \schwartz(\R^n;\R^n)$ and $h \in \schwartz(\R^n;\etensor{n}{1})$, are invisible under the elastic X-ray transform $\eXRT{2}$ (see Theorem \ref{thm:ims-and-kers-2}). This is not yet enough to conclude that the kernel of the transform is solely comprised of potential fields. We remedy this in the current section by proving that an elastic $2$-tensor field $f$ enjoys a unique decomposition into a solenoidal and a potential part in the former sense.
For technical reasons we prove the existence of such a decomposition under the assumption that $n \geq 3$. This assumption is only required to prove Lemma~\ref{lma:schwartz-decomposition}.

Our proof of existence and uniqueness of such decompositions is inspired by that of \cite[Chapter 2.3.]{Sharafutdinov1994} but the technical details are quite different.
The main difference and difficulty in our case is that our potential fields have two possibly overlapping parts related to the two differential operators $\eHesse_2$ and $\eGrad$. The proof proceeds in three steps. First, we consider decompositions of elastic tensors, which are then in the second step used to decompose the Fourier transforms of elastic tensor fields pointwise. In the last step we promote the decomposition into the space of $L^2$ fields via density arguments.

This section only concerns elastic $2$-tensor fields so we continue our convention of omitting subindex from the differential operator $\eHesse \coloneqq \eHesse_2$.

\subsection{Pointwise decomposition on the Fourier side}
\label{sec:pointwise-decompositions}

We are looking for decompositions of elastic $2$-tensor fields as $f = Hh + KW + S$ with $K^*S = H^*S = 0$, where $W$ is a vector field, $h$ is an elastic $1$-tensor field and $f$ and $S$ are elastic $2$-tensor fields. If such a decomposition exists for a sufficiently regular field, then taking Fourier transform gives $\hat f(p) = \hsymb{p}\hat h(p) + \ksymb{p}\widehat W(p) + \widehat S(p)$ with $\ktsymb{p}\widehat{S}(p) = \htsymb{p}\widehat{S}(p) = 0$. The linear maps $\ksymb{p}$ and $\hsymb{p}$ and their duals are defined below. We begin by proving that such decompositions exist pointwise on the Fourier domain.

Let a non-zero vector $p \in \R^n$ be given. We define the linear maps $\ksymb{p} \colon \R^n \to \etensor{n}{2}$ and $\hsymb{p} \colon \etensor{n}{1} \to \etensor{n}{2}$ by
\begin{equation}
(\ksymb{p}W)_{ijkl}
=
\frac14(p_iW_j+p_jW_i)\delta_{kl}
+
\frac14(p_iW_j+p_jW_i)\delta_{ij}
\end{equation}
and
\begin{equation}
(\hsymb{p}h)_{ijkl}
=
\frac12(p_ip_jh_{kl}+p_kp_lh_{ij}).
\end{equation}
Thus, for any $W \in \schwartz(\R^n;\R^n)$ and $h \in \schwartz(\R^n;\etensor{n}{1})$ the equations \eqref{eqn:oper-k}  and \eqref{eq:Op_Hesse} imply $\widehat{KW}(p) = i\ksymb{p}\widehat{W}(p)$ and $\widehat{Hh}(p) = -\hsymb{p}\widehat{h}(p)$ respectively. We define linear maps $\ktsymb{p} \colon \etensor{n}{2} \to \R^n$ and $\htsymb{p} \colon \etensor{n}{2} \to \etensor{n}{1}$ by
\begin{equation}
(\ktsymb{p}T)_i
=
\sum_{\beta,\lambda,\mu}
p_\beta\delta_{\lambda\mu}T_{i\beta\lambda\mu}
\end{equation}
and
\begin{equation}
(
\htsymb{p}T
)_{ij}
=
\sum_{ij\lambda\mu}
T_{ijkl}p_\lambda p_\mu.
\end{equation}
These are the algebraic duals of $\ksymb{p}$ and $\hsymb{p}$ as one easily verifies that
\begin{equation}
\ip{\ksymb{p}W}{T}_{\etensor{n}{2}}
=
\ip{W}{\ktsymb{p}T}_{\R^n}
\quad
\text{and}
\quad
\ip{\hsymb{p}h}{T}_{\etensor{n}{2}}
=
\ip{h}{\htsymb{p}T}_{\etensor{n}{1}}
\end{equation}
where we have the natural inner products of $\R^n$, $\etensor{n}{1}$ and $\etensor{n}{2}$.

\begin{lemma}
\label{lma:pw-decomposition}
Fix a non-zero vector $p \in \R^n$. Then the space of elastic $2$-tensors has the (interior) direct sum decomposition
\begin{equation}
\etensor{n}{2}
=
\kuvak{p}
\oplus
\kuvai{p}
\oplus
\ydinki{p}
\end{equation}
where
\begin{equation}
\begin{split}
\kuvak{p} &= \{\ksymb{p}W \,:\, W \in \R^n, W \perp p\},
\\
\kuvai{p} &= \{\hsymb{p}h \,:\, h \in \etensor{n}{1}\},
\\
\ydinki{p} &= \{S \in \etensor{n}{2}\,:\, \ktsymb{p}S = \htsymb{p}S = 0\}
.
\end{split}
\end{equation}
In addition, the projections onto the subspaces $\kuvak{p}$, $\kuvai{p}$ and $\ydinki{p}$ have $0$-homogeneous dependence on $p$.
\end{lemma}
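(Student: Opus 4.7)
The plan is to realize the three-term decomposition as an orthogonal sum on $\etensor{n}{2}$, complemented by a short componentwise check that $\kuvak{p}\cap\kuvai{p}=\{0\}$. By the scaling and rotation equivariance of the symbols, it suffices to analyze the structure at a single reference vector $p$ and transport the statement.

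The adjoint identities stated in the excerpt yield
\begin{equation*}
\ydinki{p}=\ker(\ktsymb{p})\cap\ker(\htsymb{p})=\bigl(\im(\ksymb{p})+\im(\hsymb{p})\bigr)^{\perp},
\end{equation*}
where the complement is taken with respect to the Euclidean inner product on $\etensor{n}{2}$. This already gives $\etensor{n}{2}=(\im(\ksymb{p})+\im(\hsymb{p}))\oplus\ydinki{p}$ as an orthogonal direct sum. To reconcile $\im(\ksymb{p})$ with the smaller set $\kuvak{p}$, I would verify by direct substitution in the defining formulas that $\ksymb{p}(p)=\hsymb{p}(I)$, where $I\in\etensor{n}{1}$ is the Kronecker delta. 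Since $\R^{n}=p^{\perp}\oplus\R p$, this identity gives $\im(\ksymb{p})+\im(\hsymb{p})=\kuvak{p}+\kuvai{p}$, and hence
\begin{equation*}
\etensor{n}{2}=(\kuvak{p}+\kuvai{p})\oplus\ydinki{p}.
\end{equation*}

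The main obstacle, and where I expect the actual work to sit, is promoting this to a genuine three-way direct sum by establishing $\kuvak{p}\cap\kuvai{p}=\{0\}$. Rotating coordinates to place $p=e_{1}$, suppose $\ksymb{e_{1}}W=\hsymb{e_{1}}h$ with $W_{1}=0$. Evaluating the equality componentwise at the tuples $(1,1,1,1)$, then $(1,1,k,l)$ with $k,l\geq 2$, then $(1,j,k,k)$ with $j,k\geq 2$, and finally $(1,1,1,l)$ with $l\geq 2$, successively pins down $h_{11}=0$, $h_{kl}=0$ for $k,l\geq 2$, $W_{j}=0$ for $j\geq 2$, and $h_{1l}=0$ for $l\geq 2$, whence $W=0$ and $h=0$. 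Once this trivial intersection is in hand, the three-term direct sum follows formally: if $a+b+c=0$ with $a\in\kuvak{p}$, $b\in\kuvai{p}$, and $c\in\ydinki{p}$, then orthogonality of $c$ to $a+b$ forces $c=0$, and the trivial intersection then gives $a=b=0$.

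Finally, the $0$-homogeneity of the projections is immediate from the scaling behaviour of the symbols: for $\lambda\neq 0$ one has $\ksymb{\lambda p}=\lambda\ksymb{p}$ and $\hsymb{\lambda p}=\lambda^{2}\hsymb{p}$, the dual maps scale accordingly, and $(\lambda p)^{\perp}=p^{\perp}$. Therefore each of the subspaces $\kuvak{p}$, $\kuvai{p}$, $\ydinki{p}$ depends only on the direction of $p$, and so do the orthogonal projections onto them.
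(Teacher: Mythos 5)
Your proof is correct and follows essentially the same route as the paper: the orthogonal-complement identification of $\ydinki{p}$ via the adjoint symbols, the identity $\ksymb{p}(p)=\hsymb{p}(\delta)$ used to absorb the $\R p$-component of $W$ into $\im(\hsymb{p})$, and the homogeneity of the subspaces in $p$. The only real difference is that you spell out the componentwise verification that $\kuvak{p}\cap\kuvai{p}=\{0\}$, a step the paper compresses into the remark that the resulting decomposition ``can be verified to be unique''.
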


\begin{proof}
Consider the linear map
$
\phi
\colon
\R^n \times \etensor{n}{1}
\to
\etensor{n}{2}
$
defined by $\phi(W,h) = \ksymb{p}W + \hsymb{p}h$ for all $W \in \R^n$ and $h \in \etensor{n}{1}$.
We equip $\R^n \times \etensor{n}{1}$ with the product inner product. Then the algebraic dual of $\phi$ is the linear map
$
\phi^*
\colon
\etensor{n}{2}
\to
\R^n \times \etensor{n}{1}
$
defined by $\phi^*(T) = (\ktsymb{p}T,\htsymb{p}T)$.
Therefore, since the vector spaces are finite dimensional, we get that
\begin{equation}
\begin{split}
\etensor{n}{2}
&=
\im(\phi) \oplus \im(\phi)^\perp
\\
&=
\im(\phi) \oplus \ker(\phi^*)
\\
&=
(\im(\hsymb{p}) + \im(\ksymb{p}))
\oplus
(\ker(\htsymb{p})\cap\ker(\ktsymb{p})).
\end{split}
\end{equation}
We have shown that $\etensor{n}{2} = D_p \oplus \ydinki{p}$ where $D_p = \im(\hsymb{p}) + \im(\ksymb{p})$. Next, we prove that $D_p = \kuvak{p} \oplus \kuvai{p}$.

Given $T \in D_p$ we know that there are $W \in \R^n$ and $h \in \etensor{n}{1}$ so that $T = \ksymb{p}W + \hsymb{p}h$.
The vector $W$ can be uniquely decomposed as $W_i = p_ig + S_i$ where $g \in \R$ and $S \in \R^n$ is orthogonal to $p$.
Then we can verify that the decomposition
\begin{equation}
T = \ksymb{p}S + (\ksymb{p}(pg) + \hsymb{p}h)
\end{equation}
is unique and $\ksymb{p}S \in \kuvak{p}$ and $\ksymb{p}(pg) + \hsymb{p}h = \hsymb{p}(g\delta + h) \in \kuvai{p}$ where $(g\delta)_{ij} = g\delta_{ij}$ proving that $D_p = \kuvak{p} \oplus \kuvai{p}$.

Lastly, the fact that the projections onto the subspaces are $0$-homogeneous in $p$ follows from the fact that the definitions for the subspaces are rotation invariant. In more detail, the equations defining $\ydinki{p}$ are rotation invariant, showing that the projection on $\ydinki{p}$ is $0$-homogeneous. Thus also projection onto $D_p$ is. Then it follows that projections onto $\kuvak{p}$ and $\kuvai{p}$ are $0$-homogeneous since their sum is direct.
\end{proof}

The projections onto the subspaces $\kuvak{p}$, $\kuvai{p}$ and $\ydinki{p}$ related to the direct sum decomposition in Lemma \ref{lma:pw-decomposition} will be denoted by $\pi^{\kuvak{p}} \colon \etensor{n}{2} \to \kuvak{p}$, $\pi^{\kuvai{p}} \colon \etensor{n}{2} \to \kuvai{p}$ and $\pi^{\ydinki{p}} \colon \etensor{n}{2} \to \ydinki{p}$ respectively.

Before moving onto decompositions of fields we prove that the linear maps $\ksymb{p}$ and $\hsymb{p}$ are left-invertible. This will help in analyzing the smoothness properties of the components when fields are decomposed the following section.
We define  $\kinverse{p} \colon \etensor{n}{2} \to \R^n$ by
\begin{equation}
\label{eqn:k-inverse}
(\kinverse{p} T)_i
=
4\abs{p}^{-4}
\sum_{\beta,\lambda,\mu}
T_{i\beta\lambda\mu}p_\beta p_\lambda p_\mu
\end{equation}
and $\iinverse{p} \colon \etensor{n}{2} \to \etensor{n}{1}$ by
\begin{equation}
\label{eqn:h-inverse}
(\iinverse{p}T)_{ij}
=
2\abs{p}^{-4}
\sum_{\lambda,\mu}T_{ij\lambda\mu}p_\lambda p_\mu
-
\abs{p}^{-8}
p_ip_j
\sum_{\alpha,\beta,\lambda,\mu}T_{\alpha\beta\lambda\mu}p_\alpha p_\beta p_\lambda p_\mu.
\end{equation}

\begin{lemma}
\label{lma:inverses}
Fix a non-zero vector $p \in \R^n$. Then we have $\kinverse{p}(\ksymb{p}W) = W$ and $\iinverse{p}(\hsymb{p}h) = h$ for all $W \in \R^n$ with $W \perp p$ and $h \in \etensor{n}{1}$.
\end{lemma}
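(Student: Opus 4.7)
The proof is a direct algebraic verification, carried out by substituting the definitions of $\ksymb{p}$, $\hsymb{p}$, $\kinverse{p}$, and $\iinverse{p}$ and contracting the resulting expressions with the relevant powers of $p$. The plan is to handle the two identities separately, since they rely on essentially independent computations, and to highlight why the orthogonality $W \perp p$ is needed in the first identity while no analogous condition is required for the second.

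For the identity $\kinverse{p}(\ksymb{p}W) = W$, I would first plug the definition of $\ksymb{p}$ into the formula~\eqref{eqn:k-inverse}. The sum splits into two pieces corresponding to the two $\delta$-terms in the definition of $\ksymb{p}$. Each piece can be written as a product of three scalar contractions: factors like $\sum_\beta p_\beta W_\beta = \ip{p}{W}$, $\sum_\beta p_\beta^2 = \abs{p}^2$, and $\sum_\lambda p_\lambda^2 = \abs{p}^2$. The point is that every term that appears has either an extra factor of $\ip{p}{W}$ or the scalar $W_i\abs{p}^4$ (up to an overall constant). Imposing $W \perp p$ wipes out all the $\ip{p}{W}$ contributions, leaving exactly $\tfrac14 W_i \abs{p}^4$; multiplying by $4\abs{p}^{-4}$ then gives $W_i$.

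For the identity $\iinverse{p}(\hsymb{p}h) = h$, I would similarly substitute the definition of $\hsymb{p}$ into~\eqref{eqn:h-inverse} and collect the two contractions that appear there. Define the scalar $c = \sum_{\lambda,\mu} h_{\lambda\mu}p_\lambda p_\mu$. A straightforward computation yields
\begin{equation}
2\abs{p}^{-4}\sum_{\lambda,\mu}(\hsymb{p}h)_{ij\lambda\mu}p_\lambda p_\mu
=
h_{ij} + \abs{p}^{-4} p_ip_j c,
\end{equation}
and
\begin{equation}
\abs{p}^{-8} p_ip_j \sum_{\alpha,\beta,\lambda,\mu}(\hsymb{p}h)_{\alpha\beta\lambda\mu}p_\alpha p_\beta p_\lambda p_\mu
=
\abs{p}^{-4} p_ip_j c.
\end{equation}
Subtracting the second quantity from the first is precisely the definition of $(\iinverse{p}(\hsymb{p}h))_{ij}$, and the unwanted $p_ip_j c$ term cancels exactly, leaving $h_{ij}$. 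This is in fact the reason the subtractive term in~\eqref{eqn:h-inverse} has its particular form.

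I do not expect any genuine obstacle here, as both statements are finite-dimensional linear algebra identities at a fixed $p$. The only care needed is bookkeeping: tracking the normalizing factors $\tfrac14$, $\tfrac12$, and the various $\abs{p}^{-4}$, $\abs{p}^{-8}$, and keeping the two terms of $\ksymb{p}$ separate when expanding. The role of $W \perp p$ deserves to be commented on, since without it the first identity fails by an explicit $p$-parallel correction term (this is consistent with the decomposition of Lemma~\ref{lma:pw-decomposition}, where $\kuvak{p}$ consists of $\ksymb{p}W$ only for $W \perp p$).
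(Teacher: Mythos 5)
Your proposal is correct and matches the paper, whose entire proof is the single sentence that ``a direct computation verifies'' the two identities; you have simply written out that computation, including the correct cancellation of the $\abs{p}^{-4}p_ip_jc$ terms and the role of $W\perp p$ in killing the $\ip{p}{W}$ contributions. No further comment is needed.
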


\begin{proof}
A direct computation verifies that $\kinverse{p} \circ \ksymb{p} = id_{p^\perp}$ and $\iinverse{p} \circ \hsymb{p} = id_{\etensor{n}{1}}$ where $p^\perp$ is the subspace orthogonal to $p$ in $\R^n$.
\end{proof}

In particular, Lemma \ref{lma:inverses} implies that if the decomposition of $T \in \etensor{n}{2}$ as in Lemma \ref{lma:pw-decomposition} is $T = \ksymb{p}W + \hsymb{p}h + S$ then $W = \kinverse{p}(\pi^{\kuvak{p}}T)$ and $h = \iinverse{p}(\pi^{\kuvai{p}}T)$. Here the $p$-dependence of $\kinverse{p} \circ \pi^{\kuvak{p}}$ is homogeneous of degree $-1$ and the $p$-dependence of $\iinverse{p} \circ \pi^{\kuvai{p}}$ is of degree $-2$.

\subsection{Global decomposition on the Fourier side}
\label{sec:global-decompositions-Fourier}

Having decompositions of elastic $2$-tensors proven in the previous section we are ready to show the existence of decompositions of elastic $2$-tensor fields in a certain function space $\avaruus_2$ defined below. All Schwartz tensor fields are examples of elements of $\avaruus_2$. The method of proof (see Lemma \ref{lma:schwartz-decomposition}) is to analyze the point-dependence using homogeneity properties of components in the decomposition provided by Lemma \ref{lma:pw-decomposition}. This happens on the Fourier domain.

We define $\avaruus_2$ to be the space of elastic $2$-tensor fields $f$ satisfing the properties
\begin{enumerate}
    \item $f \in \smooth{2} \cap \ltwo{2}$,
    \item $\abs{f(x)} \leq C(1 + \abs{x})^{1-n}$ for all $x \in \R^n$, and
    \item $\hat f(p)$ is smooth away from $p = 0$ and rapidly decreasing.
\end{enumerate}
In particular, $\schwartz(\R^n;\etensor{n}{2})\subseteq \avaruus_2$ so $\avaruus_2$ is dense in $\ltwo{2}$. In addition, we define the space $\avaruus^H_1$ to be the space of smooth elastic $1$-tensor fields $h$ so that $\eHesse h \in \avaruus_2$, and the space $\avaruus^K_0$ to be the space of smooth vector fields $W$ so that $\eGrad W \in \avaruus_2$.

\begin{lemma}
\label{lma:schwartz-decomposition}
Let $n \geq 3$.
Any elastic $2$-tensor field $f \in \avaruus_2$ can be uniquely decomposed as $f = P + S$ with $H^*S = K^*S = 0$ where $P,S \in \avaruus_2$ so that $P = KW + Hh$ for some $W \in \avaruus^K_0$ and $h \in \avaruus^H_1$.
In addition, all first partial derivatives of $S$, $W$ and $h$ are in $L^\infty$.
\end{lemma}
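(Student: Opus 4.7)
The plan is to lift the algebraic pointwise decomposition from Lemma~\ref{lma:pw-decomposition} to a decomposition of fields by working on the Fourier side. For any $f\in\avaruus_2$ and every $p\neq 0$ I would define
\[
\widehat{S}(p) \coloneqq \pi^{\ydinki{p}}\widehat{f}(p),
\quad
\widehat{W}(p) \coloneqq -i\kinverse{p}\pi^{\kuvak{p}}\widehat{f}(p),
\quad
\widehat{h}(p) \coloneqq -\iinverse{p}\pi^{\kuvai{p}}\widehat{f}(p).
\]
Using the identities $\widehat{KW}(p) = i\ksymb{p}\widehat{W}(p)$ and $\widehat{Hh}(p) = -\hsymb{p}\widehat{h}(p)$ from Section~\ref{sec:pointwise-decompositions} together with Lemma~\ref{lma:inverses}, one checks that $\widehat{KW}(p) = \pi^{\kuvak{p}}\widehat{f}(p)$ and $\widehat{Hh}(p) = \pi^{\kuvai{p}}\widehat{f}(p)$, so that Lemma~\ref{lma:pw-decomposition} gives the pointwise identity $\widehat{f} = \widehat{KW} + \widehat{Hh} + \widehat{S}$ on $\R^n\setminus\{0\}$ with $\ktsymb{p}\widehat{S}(p) = \htsymb{p}\widehat{S}(p) = 0$.

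All the regularity and decay estimates are then driven by homogeneity in $p$. The three projections are $0$-homogeneous, smooth away from the origin, and pointwise bounded, while the left-inverses $\kinverse{p}$ and $\iinverse{p}$ are $-1$- and $-2$-homogeneous respectively. This yields the pointwise bounds $|\widehat{S}|,|\widehat{KW}|,|\widehat{Hh}|\lesssim |\widehat{f}|$, $|\widehat{W}(p)|\lesssim |p|^{-1}|\widehat{f}(p)|$, and $|\widehat{h}(p)|\lesssim |p|^{-2}|\widehat{f}(p)|$, together with analogous bounds for their polynomial multiples. Since $\widehat{f}$ is smooth away from the origin and rapidly decreasing at infinity by definition of $\avaruus_2$, these symbols and all their polynomial multiples lie in $L^1(\R^n)$ exactly when the $|p|^{-2}$ singularity of $\widehat{h}$ is locally integrable near $p=0$, that is, when $n\geq 3$. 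This is the single place where the dimension hypothesis is used.

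Inverting the Fourier transform then produces smooth fields $W,h,S$, and the $L^\infty$ bounds on first partial derivatives follow from $|p_j\widehat{W}|\lesssim|\widehat{f}|$, $|p_j\widehat{h}|\lesssim|p|^{-1}|\widehat{f}|$, and $|p_j\widehat{S}|\lesssim|p||\widehat{f}|$ all being in $L^1(\R^n)$ under the same $n\geq 3$ assumption. The memberships $S, KW, Hh\in\avaruus_2$ follow from smoothness and from $L^2$ boundedness of the projected Fourier data combined with the standard Fourier analysis of symbols with a $0$-homogeneous discontinuity at the origin, which produces leading-order $|x|^{-n}$ decay and hence the required $(1+|x|)^{1-n}$ bound. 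Uniqueness of the decomposition $f=P+S$ is immediate from Lemma~\ref{lma:pw-decomposition}: if $f = P_1 + S_1 = P_2 + S_2$ are two such decompositions, then pointwise on $\R^n\setminus\{0\}$ the difference $(\widehat{P_1}-\widehat{P_2})(p)\in\kuvak{p}\oplus\kuvai{p}$ equals $(\widehat{S_2}-\widehat{S_1})(p)\in\ydinki{p}$, and the complementarity of these subspaces forces both to vanish.

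The main obstacle is the low-frequency analysis of $\widehat{h}$. Its singularity of order $|p|^{-2}$ at the origin is precisely what imposes $n\geq 3$; for $n\leq 2$ the local non-integrability of $|p|^{-2}$ obstructs extracting a bounded smooth field by this route. A secondary technical point is extracting the sharp pointwise decay $(1+|x|)^{1-n}$ from the jump-type discontinuity of the $0$-homogeneous projections at $p=0$, which requires a careful Fourier-analytic argument rather than a routine $L^1$ estimate.
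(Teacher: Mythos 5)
Your proposal is correct and follows essentially the same route as the paper: apply the pointwise Fourier-domain decomposition of Lemma~\ref{lma:pw-decomposition} together with the left inverses of Lemma~\ref{lma:inverses}, use the $0$-, $(-1)$- and $(-2)$-homogeneity of the projections and inverses to control the singularity at $p=0$ (which is where $n\geq 3$ enters, via the local integrability of $\abs{p}^{-2}$), and then invert the Fourier transform to obtain the smoothness, decay, and bounded first derivatives. The only place where you are slightly more terse than the paper is the $(1+\abs{x})^{1-n}$ decay, which the paper extracts from the derivative estimates $\abs{\partial^\delta_p\widehat{S}(p)}\leq C\abs{p}^{-\abs{\delta}}$ obtained by the Leibniz rule, but you correctly flag this as the remaining technical point.
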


\begin{proof}
For any non-zero $p \in \R^n$, it follows from Lemma \ref{lma:pw-decomposition} that, the Fourier transform of $f$ can be uniquely decomposed as
\begin{equation}
\hat f(p)
=
\ksymb{p}\widehat W(p)
+
\hsymb{p}\hat h(p)
+
\widehat S(p)
\end{equation}
where $\widehat W(p) \perp p$ and $\hat h(p) \in \etensor{n}{1}$ and $\widehat S(p) \in \etensor{n}{2}$ is so that $\ktsymb{p}\widehat S(p) = \htsymb{p}\widehat S(p) = 0$. Moreover, by Lemma \ref{lma:inverses} we have that
\begin{equation}
\label{eqn:proof-of-sch-dec-1}
\widehat W(p)
=
(\kinverse{p} \circ \pi^{\kuvak{p}})\hat{f}(p),
\quad
\hat h(p)
=
(\iinverse{p} \circ \pi^{\kuvai{p}})\hat{f}(p)
\quad
\text{and}
\quad
\widehat S(p) = \pi^{\ydinki{p}} \hat f(p).
\end{equation}
Equation~\eqref{eqn:proof-of-sch-dec-1} defines integrable functions since $n \geq 3$. This is true since $\hat f$ is rapidly decreasing, while near the origin $\hat S$ is bounded, $\abs{\widehat W(p)} \leq C\abs{p}^{-1}$, and $\abs{\hat h(p)} \leq C\abs{p}^{-2}$ due to equations~\eqref{eqn:k-inverse} and~\eqref{eqn:h-inverse}.

Since the Fourier transform of $f$ is rapidly decreasing and smooth when $p \ne 0$, we get from \eqref{eqn:proof-of-sch-dec-1} and homogeneity that $\widehat W(p)$, $\hat h(p)$ and $\widehat S(p)$ are smooth when $p \ne 0$ and rapidly decreasing. 
Thus taking the inverse Fourier transform the formulas
\begin{equation*}
W(x)
\coloneqq
(-i)
\mathcal{F}^{-1}(\widehat W)(x),
\quad
h(x)
\coloneqq
(-1)
\mathcal{F}^{-1}(\hat h)(x)
\quad
\text{and}
\quad
S(x)
\coloneqq
\mathcal{F}^{-1}(\widehat{S})(x)
\end{equation*}
define $W \in C^\infty(\R^n;\R^n)$, $h \in \smooth{1}$ and $S \in \smooth{2}$ so that
\begin{equation}
f(x)
=
KW(x)
+
Hh(x)
+
S(x)
\quad
\text{with}
\quad
K^*S(x) = H^*S(x) = 0.
\end{equation}
It remains to prove that $P,S \in \avaruus_2$, $W \in \avaruus^K_0$ and $h \in \avaruus^H_1$. To accomplish this it suffices to prove that $KW,Hh,S \in \avaruus_2$.

For any multi-index $\delta$ it holds that
\begin{equation}
\partial_p^\delta\widehat{S}(p)
=
\sum_{\alpha,\beta,\lambda,\mu}
\left(
\sum_{\epsilon \leq \delta}
\binom{\delta}{\epsilon}
\partial_p^\epsilon
(\pi^C_p)^{\alpha\beta\lambda\mu}_{ijkl}
\partial^{\delta - \epsilon}_p\hat{f}_{\alpha\beta\lambda\mu}(p)
\right).
\end{equation}
Thus, due to homogeneity of $\pi^{\ydinki{p}}$, we obtain the estimate
\begin{equation}
\abs{\partial^\delta_p\widehat{S}(p)}
\leq
C\abs{p}^{-\abs{\delta}}.
\end{equation}
This together with rapid decreasing of $\hat{f}$ proves that $\widehat{S} \in \ltwo{2}$. By a similar computation, and using $0$-homogeneity of the other two projections, we prove that $\ksymb{p}\widehat{W},\hsymb{p}\hat{h}\in \ltwo{2}$. Thus by the Plancherel formula $S,KW,Hh \in \ltwo{2}$. Furthermore, these estimates for $\widehat{S}$, $\ksymb{p}\widehat{W}$ and $\hsymb{p}\hat{h}$ prove that $S$, $KW$ and $Hh$ satisfy the estimates
\begin{equation}
\abs{S(x)},\abs{KW(x)},\abs{Hh(x)}
\leq
C(1+\abs{x})^{1-n}.
\end{equation}
Since smoothness and decay properties of the Fourier transforms were already established, we conclude that $S,KW,Hh \in \avaruus_2$.

As the last step we prove that the derivatives of $S$, $W$ and $h$ are bounded. We will only prove this for $h$. The same estimates hold for $S$ and $W$. Taking the Fourier transform and the inverse Fourier transform yields
\begin{equation}
\abs{\partial_ih(x)}
\leq
\int_{\R^n}
\abs{p}\abs{\hat h(p)}
\,dp.
\end{equation}
Since $\hat h$ is rapidly decreasing, the right-hand side above is finite if and only if
\begin{equation}
\int_{B(0,1)}
\abs{p}\abs{\hat{h}(p)}
\,dp
<
\infty.
\end{equation}
We know from Lemma \ref{lma:inverses} that $\hat{h}(p) = (\iinverse{p}\pi^B_p)\hat{f}(p)$ so homogeneity implies that $\abs{\hat{h}(p)} \leq C\abs{p}^{-2}$. Thus
\begin{equation}
\int_{B(0,1)}
\abs{p}\abs{\hat{h}(p)}
\,dp
\leq
C
\int_{B(0,1)}
\abs{p}^{-1}
\,dp
\end{equation}
where the right-hand side is finite since $n \geq 3$.
Thus the derivative is bounded as claimed.
\end{proof}

\subsection{Decomposition of $L^2$-tensor fields}
\label{sec:helmholt-final-proof}

We let $n \geq 3$ throughout this section.
The main content of the section is an orthogonal direct sum decomposition of the space of $L^2$ elastic tensor fields into solenoidal and potential parts. The proof of existence of such decomposition is mainly based on density arguments and Lemma \ref{lma:schwartz-decomposition}. We end the section with the proof of Theorem \ref{thm:ker-characterization-2}.

Let $\sol_2$, the subspace of solenoidal fields in $\ltwo{2}$, be the $L^2$-closure of the set of elastic $2$-tensor fields $S \in \avaruus_2$ so that $K^*S = H^*S = 0$. Let $\pot_2$, the subspace of potential fields in $\ltwo{2}$, be the $L^2$-closure of the set $H\avaruus^H_1 + K\avaruus^K_0$.

We need the following auxiliary lemma to prove our decomposition. Here $\nu$ is denotes the inward unit normal to the boundary of a smooth domain $D \subset \R^n$.

\begin{lemma}
\label{lma:greens}
Let $D$ be an open and bounded subset of $\R^n$ with smooth boundary. 
Then for all $S \in \smooth{2}$, $h \in \smooth{1}$, and $W \in \smooth{0}$ we have that
\begin{equation}
\int_{\overline{D}}
\left[
\ip{S}{KW}
-
\ip{K^*S}{W}
\right]
\,dx
=
\sum_{i,j,k}
\int_{\partial\overline{D}}
\nu_jS_{ijkk}W_i
\,dS
\end{equation}
and
\begin{equation}
\int_{\overline{D}}
\left[
\ip{S}{Hh}
-
\ip{H^*S}{h}
\right]
\,dx
=
\sum_{i,j,k,l}
\int_{\partial\overline{D}}
\left(
\nu_iS_{ijkl}\partial_jh_{kl}
-
\nu_j\partial_iS_{ijkl}h_{kl}
\right)
\,dS.
\end{equation}
\end{lemma}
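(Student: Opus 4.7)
The plan is to prove both identities by direct application of the divergence theorem (integration by parts), the only nontrivial bookkeeping being the use of the elastic symmetries $S_{ijkl}=S_{jikl}=S_{klij}$ to align indices with the given formulas for $K^*S$ and $H^*S$. Nothing beyond classical calculus on smooth bounded domains is needed. Because all objects are smooth and $\overline D$ is compact, all integrals are finite and the boundary integrals make sense.

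First, I would rewrite the $L^2$ pairings on the left-hand sides by unfolding the elastic symmetrizations in $KW$ and $Hh$. Since $S$ has the full elastic symmetries, pairing $S$ against $\elasticsymmetrization(A)$ is the same as pairing against $A$ itself, so
\[
\ip{S}{KW}=\sum_{i,j,k} S_{ijkk}\,\partial_j W_i
\qquad\text{and}\qquad
\ip{S}{Hh}=\sum_{i,j,k,l} S_{ijkl}\,\partial_i\partial_j h_{kl}.
\]
This is where the symmetries of elastic tensors enter in an essential way; after this reduction everything is a classical manipulation.

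For the first identity I would integrate by parts once in the variable $x_j$. Applied to $\int_{\overline D} S_{ijkk}\,\partial_j W_i\,dx$ with $\nu$ the inward unit normal, the divergence theorem gives an interior term $-\sum_{i,j,k}\int_{\overline D}(\partial_j S_{ijkk})W_i\,dx$ and a boundary term involving $\nu_j S_{ijkk} W_i$. Recognizing the interior term as $\int_{\overline D}\ip{K^*S}{W}\,dx$ via the identity $(K^*S)_i=-\sum_{j,k}\partial_j S_{ijkk}$ then yields the claim, up to the sign convention fixed by having $\nu$ inward.

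For the second identity I would integrate by parts twice. Starting from $\int_{\overline D} S_{ijkl}\,\partial_i\partial_j h_{kl}\,dx$, the first integration moves $\partial_i$ off $\partial_j h_{kl}$ and produces the boundary contribution $\sum_{ijkl}\int_{\partial\overline D}\nu_i S_{ijkl}\,\partial_j h_{kl}\,dS$ together with an interior term $-\sum\int_{\overline D}(\partial_i S_{ijkl})(\partial_j h_{kl})\,dx$. A second integration by parts on the latter, moving $\partial_j$ off $h_{kl}$, produces the boundary contribution $\sum_{ijkl}\int_{\partial\overline D}\nu_j(\partial_i S_{ijkl})h_{kl}\,dS$ and an interior term $\sum\int_{\overline D}(\partial_i\partial_j S_{ijkl})h_{kl}\,dx$, which by the definition $(H^*S)_{kl}=\sum_{i,j}\partial_i\partial_j S_{ijkl}$ (valid after relabeling indices using elastic symmetry) is precisely $\int_{\overline D}\ip{H^*S}{h}\,dx$. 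Collecting the two boundary contributions gives the asserted formula.

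The main ``obstacle'' is only cosmetic: one must carefully track signs from the inward-normal convention and, in two or three places, apply the symmetry $S_{ijkl}=S_{jikl}=S_{klij}$ to relabel summation indices so that the expressions match the formulas for $K^*S$ and $H^*S$ given in Section~\ref{sec:proof-for-rank-2} verbatim. No regularity issue arises because $S$, $W$, $h$ are all smooth on $\overline D$ and $\partial\overline D$ is smooth, so every integration by parts is justified without any limiting argument.
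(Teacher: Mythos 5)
Your proof is correct and follows exactly the route the paper takes: the paper's entire proof is the single remark that the identities follow from Green's identity after computing the pointwise inner products in coordinates, which is precisely your argument (including the key observation that pairing the elastic-symmetric $S$ against $\varepsilon(A)$ equals pairing it against $A$, so the symmetrization can be dropped before integrating by parts). The only point left implicit in your write-up is the overall sign of the boundary terms forced by the inward-normal convention, which you flag but do not pin down; this is immaterial for the lemma's only use in Proposition~\ref{thm:helmholtz}, where all that matters is that the boundary terms vanish as $R\to\infty$.
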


\begin{proof}
The proof is a straightforward application of the Green's identity after computing the differences of the pointwise inner products in coordinates.
\end{proof}

\begin{proposition}
\label{thm:helmholtz}
Let $n \geq 3$.
The space of $L^2$-regular elastic $2$-tensor fields enjoys the orthogonal direct sum decomposition
\begin{equation}
\ltwo{2} = \pot_2 \oplus \sol_2
\end{equation}
where the subspaces $\pot_2$ and $\sol_2$ are defined above.
\end{proposition}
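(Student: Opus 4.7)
The plan has two parts: (i) establish that $\pot_2$ and $\sol_2$ are orthogonal in $\ltwo{2}$, and (ii) extend the decomposition of Lemma~\ref{lma:schwartz-decomposition} from $\avaruus_2$ to all of $\ltwo{2}$ by density, using the orthogonality from (i).

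For the orthogonality I would pass to the Fourier side. The pointwise splitting of Lemma~\ref{lma:pw-decomposition} was obtained by identifying $\ydinki{p} = \ker(\htsymb{p}) \cap \ker(\ktsymb{p})$ with the orthogonal complement of $\im(\phi) = \kuvak{p} + \kuvai{p}$ in $\etensor{n}{2}$ under its natural inner product. Thus for any $S \in \avaruus_2$ with $K^*S = H^*S = 0$ and any $W \in \avaruus^K_0$, $h \in \avaruus^H_1$, the pointwise identities $\widehat{KW}(p) = i\ksymb{p}\widehat{W}(p) \in \kuvak{p}$, $\widehat{Hh}(p) = -\hsymb{p}\widehat{h}(p) \in \kuvai{p}$ and $\widehat{S}(p) \in \ydinki{p}$ yield $\ip{\widehat{S}(p)}{\widehat{KW + Hh}(p)}_{\etensor{n}{2}} = 0$ for every $p \neq 0$. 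Integrating this pointwise orthogonality and invoking Plancherel gives $\iip{S}{KW + Hh}_{\ltwo{2}} = 0$. Taking $L^2$-closures on both sides yields $\sol_2 \perp \pot_2$.

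For existence of the decomposition on $\ltwo{2}$, I would use that $\schwartz(\R^n;\etensor{n}{2}) \subseteq \avaruus_2$ is dense in $\ltwo{2}$. Given $f \in \ltwo{2}$, choose $f_k \in \avaruus_2$ with $f_k \to f$ in $L^2$ and apply Lemma~\ref{lma:schwartz-decomposition} to obtain $f_k = P_k + S_k$ with $P_k = KW_k + Hh_k \in K\avaruus^K_0 + H\avaruus^H_1$ and $S_k \in \avaruus_2 \cap \ker(K^*) \cap \ker(H^*)$. Applying the orthogonality from the first step to the pair $(P_k - P_m, S_k - S_m)$ gives
\begin{equation}
\norm{f_k - f_m}_{\ltwo{2}}^2
=
\norm{P_k - P_m}_{\ltwo{2}}^2
+
\norm{S_k - S_m}_{\ltwo{2}}^2,
\end{equation}
so both $\{P_k\}$ and $\{S_k\}$ are $L^2$-Cauchy, converging to some $P \in \pot_2$ and $S \in \sol_2$ with $f = P + S$. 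Uniqueness is immediate from orthogonality: any element of $\pot_2 \cap \sol_2$ is orthogonal to itself and thus vanishes, so the sum is direct.

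The step that I expect to require the most care is making the Fourier-side orthogonality rigorous for fields in $\avaruus_2$. Specifically, one must verify that $\widehat{S}$ and $\widehat{KW + Hh}$ are genuinely square-integrable so that Plancherel applies and that the pointwise inner product is an $L^1$ function of $p$ whose integral agrees with the $L^2$ inner product. This is supplied precisely by the smoothness and decay built into the definition of $\avaruus_2$ together with the conclusion of Lemma~\ref{lma:schwartz-decomposition}, which is where the assumption $n \geq 3$ enters. Once that verification is in place, the remaining ingredients are standard density and orthogonal-projection arguments in Hilbert space.
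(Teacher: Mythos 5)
Your proof is correct, and its second half (the Cauchy-sequence density argument for existence, plus uniqueness from orthogonality) is exactly the paper's. The orthogonality step, however, takes a genuinely different route. The paper works on the physical side: it applies the Green-type identities of Lemma~\ref{lma:greens} on balls $\{\abs{x}\le R\}$ and lets $R\to\infty$, using the decay of $S$, $W$, $h$ and the $L^\infty$ bounds on their first derivatives from Lemma~\ref{lma:schwartz-decomposition} to kill the boundary terms. You instead work on the Fourier side, noting that $\widehat S(p)\in\ydinki{p}$ while $\widehat{KW+Hh}(p)$ lies in the complementary summand of Lemma~\ref{lma:pw-decomposition}, and integrate the pointwise orthogonality via Plancherel. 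Your route is cleaner: no boundary terms, no need for the derivative bounds, and the orthogonality itself does not use $n\ge3$ (that hypothesis enters only through Lemma~\ref{lma:schwartz-decomposition} in the existence step). Two points deserve care. First, $\ksymb{p}\widehat W(p)$ need not lie in $\kuvak{p}$, since $\widehat W(p)$ need not be orthogonal to $p$; it lies only in $\im(\ksymb{p})+\im(\hsymb{p})=\kuvak{p}\oplus\kuvai{p}$, which is still orthogonal to $\ydinki{p}$, so your conclusion stands but the intermediate claim should be phrased that way. Second, for a general $W\in\avaruus^K_0$ (resp.\ $h\in\avaruus^H_1$) only $KW$ (resp.\ $Hh$), not $W$ itself, is assumed to be well behaved, so the identity $\widehat{KW}(p)=i\ksymb{p}\widehat W(p)$ requires a word of justification (e.g.\ that such $W$ is tempered); a cognate issue is implicit in the paper's own treatment of the boundary integrals, so this is a point where both arguments would benefit from an extra sentence rather than a defect specific to yours.
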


\begin{proof}
First, we prove that the subspaces $\sol_2$ and $\pot_2$ are orthogonal to each other. Let $P \in \pot_2$ and $S \in \sol_2$. Then there are $S_m \in \avaruus_2$ so that $K^*S_m = H^*S_m = 0$ so that $S_m \to S$ in $\ltwo{2}$. Also, there are $P_m = Hh_m + KW_m$ where $h_m \in \avaruus^H_1$ and $W_m \in \avaruus^K_0$ so that $P_m \to P$ in $\ltwo{2}$. It follows from Lemma \ref{lma:greens} that
\begin{equation}
\label{eqn:greens-applied}
\begin{split}
\int_{\abs{x} \leq R}
\ip{S_m}{P_m}
\,dx
&=
\sum_{i,j,k}
\int_{\abs{x} = R}
\nu_j(S_m)_{ijkk}(W_m)_i
\,dS
\\
&\quad + 
\sum_{i,j,k,l}
\int_{\abs{x} = R}
\left(
\nu_i(S_m)_{ijkl}\partial_j(h_m)_{kl}
-
\nu_j\partial_i(S_m)_{ijkl}(h_m)_{kl}
\right)
\,dS.
\end{split}
\end{equation}
Due to Lemma \ref{lma:schwartz-decomposition}, the derivatives of $S$ and $h$ are bounded and $S$, $W$ and $h$ vanishes at infinity. Thus, the right-hand side of \eqref{eqn:greens-applied} goes to zero as $R \to \infty$. Thus, $\iip{S_m}{P_m}_{\ltwo{2}} = 0$  and after passing to the limit $m \to \infty$ we get $\iip{S}{P}_{\ltwo{2}} = 0$.

The fact that $\pot_2 \oplus \sol_2 = \ltwo{2}$ follows from the density of $\avaruus_2$ in $\ltwo{2}$ and from Lemma \ref{lma:schwartz-decomposition}: Let $f \in \ltwo{2}$ and choose a sequence $f_k \in \avaruus_2$ such that $f_k \to f$ in $\ltwo{2}$. Then, by Lemma \ref{lma:schwartz-decomposition}, there are $P_ k \in \avaruus_2$ and $S_k \in \avaruus_2$ so that
\begin{equation}
\label{eqn:l2-decomposition-proof}
f_ k = P_k + S_k,
\quad
\text{with}
\quad
H^*S_k=K^*S_k=0
\end{equation}
and $P_k = KW_k + Hh_k$ for some $W_k \in \avaruus^K_0$ and $h_k \in \avaruus^H_1$.

We have
\begin{equation}
f_l - f_k
=
(P_k - P_l)
+
(S_k - S_k).
\end{equation}
Due to the orthogonality proved earlier we see that
\begin{equation}
\norm{f_k-f_l}^2
=
\norm{P_k - P_l}^2
+
\norm{S_k - S_k}^2.
\end{equation}
Thus since $(f_k)$ is Cauchy, also the sequences $(P_k)$ and $(S_k)$ are Cauchy, and consequently converge to some $P \in \ltwo{2}$ and $S \in \ltwo{2}$. In fact, $P \in \pot_2$ and $S \in \sol_2$ respectively. Then passing to the limit $k \to \infty$ in \eqref{eqn:l2-decomposition-proof} proves the existence of the desired sum decomposition.
\end{proof}

Before proving the main theorem of this section (Theorem~\ref{thm:ker-characterization-2}) we note that all tensor fields in $\pot_2$ can be written as $KW + Hh$ where $W \in H^1_K$ and $h \in H^2_H$. This follows by continuity of $K$ and $H$. The space $H^1_K$ is the completion of $\schwartz(\R^n;\R^n)$ with respect to the norm
\begin{equation}
\norm{W}^2
\coloneqq
\norm{W}^2_{L^2(\R^n;\R^n)}
+
\norm{\eGrad W}^2_{\ltwo{2}}
\end{equation}
and the space $H^2_H$ is the completion of the space $\schwartz(\R^n;\etensor{n}{1})$ with respect to the norm
\begin{equation}
\norm{h}^2
\coloneqq
\norm{h}^2_{\ltwo{1}}
+
\norm{\eHesse h}^2_{\ltwo{2}}.
\end{equation}

\begin{proof}[Proof of Theorem~\ref{thm:ker-characterization-2}]
Since $X_{v,q}$ is continuous from $\ltwo{2}$ to $L^2(v^\perp;\R)$ for all $v \in \sphere{n-1}$ and $v \in \polarizationset{v}{}$, and $\schwartz(\R^n;\etensor{n}{2})$ is dense in $\ltwo{2}$, the following claims on $\ltwo{2}$ are consequences of Theorem~\ref{thm:ims-and-kers-2}: 
\begin{equation}
\label{eqn:l2-pot-invisible}
\im(\eHesse)
+
\im(\eGrad)
\subseteq
\ker(\eXRT{2}),
\end{equation}
and
\begin{equation}
\label{eqn:l2-sol-visible}
\ker(\eHesse^*)
\cap
\ker(\eGrad^*)
\cap
\ker(\eXRT{2})
=
0.
\end{equation}

First, let $f \in \ltwo{2}$ be so that $\eXRT{2}f = 0$. By Theorem~\ref{thm:helmholtz} there are unique $P \in \pot_2$ and $S \in \sol_2$ so that $f = P + S$. Then by~\eqref{eqn:l2-pot-invisible} we have $\eXRT{2}S = 0$ since $\eXRT{2}f = 0$. Thus by~\eqref{eqn:l2-sol-visible} we find that $S = 0$. Hence $f = P = \eGrad W + \eHesse h$ for some $W \in H^1_\eGrad$ and $h \in H^2_\eHesse$.

Conversely, let $f = \eGrad W + \eHesse h$ for some $W \in H^1_\eGrad$ and $h \in H^2_\eHesse$. Then it follows from~\eqref{eqn:l2-pot-invisible} that $\eXRT{2}f = 0$.
\end{proof}

\begin{remark}
If we denote $M(f) \coloneqq fI$ where $f$ is a function and $I$ is the identity matrix then it holds that $K\circ \nabla = H \circ M$. Moreover, we have that
\begin{equation}
\im(\eHesse)\cap \im(\eGrad) = \im(\eGrad\circ \nabla) = \im(\eHesse \circ M),
\end{equation}
as can be verified by taking a Fourier transform.
We do not, however, need these properties.
Therefore the decomposition of a tensor field in the kernel of $\eXRT{2}$ into the images of $\eHesse$ and $\eGrad$ is not unique.
\end{remark}

\section{Computer-assisted proofs}
\label{sec:cas-proofs}

Some of our proofs are partially computer-assisted, and we collect them all in this section due to their similarity.
Computers are only used to set up and solve finite but relatively large linear systems.
These computations are easy to carry out by hand in 2D and 3D, but in higher dimensions a manual approach is too prone to errors.

The computer algebra system we use is the freely available Maxima~\cite{maxima}, and our code is available alongside the arXiv submission~\cite{this}.

\begin{proof}[Proof of Lemma~\ref{lma:no-pw-kernel}]
First, let us argue why proving the lemma for $n\leq4$ implies it for all dimensions.
Suppose that $n>4$ and~\eqref{eq:lma:no-pw-kernel} holds.
We want to show that all components of $a$ vanish, so pick any quadruplet $i_0j_0k_0l_0$ of indices.
Let $W\subset\R^n$ be the subspace spanned by the basis vectors $e_{i_0},e_{j_0},e_{k_0},e_{l_0}$.
If two or more of the four indices coincide, add basis vectors so that $\dim(W)=4$.
By assumption~\eqref{eq:lma:no-pw-kernel} holds for all unit vectors $v\in\sphere{n-1}$ and $q\in\polarizationset{v}{}$, so it also holds with the added assumption that $v\in W$ and $q\in W$.
This is exactly the setting of the lemma when $n=4$, so we conclude that $a_{i_0j_0k_0l_0}=0$.

Let us then turn to the low-dimensional cases.
The case $n=1$ is trivial (the space $\stiffnesstensor{1}$ is one-dimensional), so we are left with $2\leq n\leq4$.
We describe the case $n=3$ here in some detail.
The other two dimensions are analogous and the details can be found in the accompanying Maxima files.

The condition we assume in~\eqref{eq:lma:no-pw-kernel} is linear in $a\in\stiffnesstensor{3}$.
There are infinitely many conditions, and we turn them into an equivalent finite set.
First of all, the conditions are homogeneous in $v$ and $q$, so their normalization is irrelevant.
We parametrize the vector $v$ as
\begin{equation}
\label{eq:cas-3D-v}
v
=
(x,y,1)
\in
\R^3
\end{equation}
with $x,y\in\R^2$.
Upon normalization this covers the unit sphere apart from the equator $z=0$.
Information on the equator is contained in the behavior when $x$ or $y$ tends to infinity.
It may be helpful to think of $(x,y)$ as an element of the projective plane rather than the affine plane.

The polarization vector $q$ can be of two different kinds.
Either it is parallel to $v$ (corresponding to P-waves) or it is orthogonal to $v$ (corresponding to S-waves).
In the former case we take $q=v$, again without loss of generality due to homogeneity.
In the latter case we take
\begin{equation}
q
=
(y,-x+r,-ry)
\in\R^3,
\end{equation}
parametrized with $x,y,r\in\R^3$.
Up to normalization and taking suitable limits at infinity, this parametrizes all vectors orthogonal to the $v$ of~\eqref{eq:cas-3D-v}.

The assumption~\eqref{eq:lma:no-pw-kernel} is now split to two cases.
In the P-wave case the condition is a polynomial in $(x,y)$ and in the S-wave case it is a polynomial in $(x,y,r)$.
The assumption~\eqref{eq:lma:no-pw-kernel} is equivalent with both of these polynomials functions being identically zero.
This, in turn, is equivalent with the coefficients of both polynomials being all zero.
(The coefficients of maximal degree in $(x,y)$ correspond to taking $v$ to be in the equatorial plane $z=0$, so that information was indeed not lost.)
Each coefficient is a linear combination of components of $a\in\stiffnesstensor{3}$.

What we do on Maxima is to form these polynomials, create the list of coefficients, and solve the resulting linear system for $a$.
The only solution is indeed $a=0$.

The only difference in dimensions $n=2$ and $n=4$ is the parametrization of the velocity $v$ (requiring $n-1$ parameters) and the S-wave polarization $q$ (requiring $n-2$ new parameters).
\end{proof}

The benefit of parametrizing our vectors as polynomial rather than trigonometric functions of parameters is that extracting an equivalent set of conditions is far more straightforward and something we may easily trust a computer with.
Trigonometric identities complicate matters; for a 1D example, assuming $a+b\sin^2(\theta)+c\cos^2(\theta)=0$ for all $\theta\in\R$ only implies that $a+c=b-c=0$.
The polynomial approach only gives the slight discomfort of not having the relevant vector normalized and some of the information being pushed to infinity.

\begin{proof}[Proof of Lemma~\ref{lma:reduction}]
Arguing as in the proof of lemma~\ref{lma:no-pw-kernel}, it is enough to prove the claim for dimensions $2$, $3$, $4$ and $5$.

We describe the case $n = 3$ in some detail. The other dimensions are analogous and the details can be found in the accompanying Maxima files.

Assume that $a_{11ij}(p) = 0$ and that~\eqref{eq:vv1} and~\eqref{eq:vv2} hold. We proceed analogous to the proof of lemma~\ref{lma:no-pw-kernel}. The conditions are again homogeneous in $v$ and $q$, and we parametrize the vector $v$, which is now perpendicular to $p$, by
\begin{equation}
v
=
(0,1,x)
\in \R^3
\end{equation}
with $x \in \R$. The polarization vector $q$ can either be parallel to $v$ or orthogonal to $v$. In the former case we take $q = v$ and in the latter we take
\begin{equation}
q
=
(y,x,-1)
\in
\R^3
\end{equation}
with $x,y \in \R$. Given these parametrizations the conditions on $a$ are equivalent to certain polynomials in $x$ and $y$ vanishing identically. The main difference to the proof of lemma~\ref{lma:no-pw-kernel} is the set of vectors which $v$ belongs to. This is reflected in the parametrization.

We create these polynomials in Maxima, extract the lists of coefficients, and solve the resulting linear system for $a$. The result is~\eqref{eqn:reduction1}.

Conversely, proving that~\eqref{eqn:reduction1} together with $a_{11ij}(p) = 0$ implies that~\eqref{eq:vv1} and~\eqref{eq:vv2} hold is easily checked by substitution. Thus the conditions are equivalent as claimed.
\end{proof}

\appendix

\section{Spectral perturbation theory}
\label{app:spt}

This appendix is a soft overview of the basic phenomena of spectral perturbations of real symmetric matrices relevant for the linearization discussion in Section~\ref{sec:linearization}.
For more details, see~\cite{Kato1966}.
A similar discussion to ours is found in many textbooks of quantum mechanics.

Let $A(s)$, $s\in(-\eps,\eps)$, be a family of real symmetric matrices.
Suppose there exists a family of orthonormal eigenvectors $v_k(s)$ and eigevalues $\lambda_k(s)$ depending $C^1$-smoothly on $s$.\footnote{%
We take the existence of such smooth families as our starting point. For a proof that there are holomorphic $\lambda_k(\dummy)$ and $v_k(\dummy)$ when $A(\dummy)$ is holomorphic, see \cite[Chapter 2, Theorem 1.10]{Kato1966}.
For the $C^1$ case (only for eigenvalues), see Theorem 6.8 in the same chapter.
}
The eigenvalues $\lambda_k(0)$ need not be distinct.
When we omit the parameter $s$, we take it to be zero, and we denote the derivative in $s$ by prime.

Differentiating the eigenvalue equation yields
\begin{equation}
\label{eq:spt1}
A'v_k+Av'_k
=
\lambda'_kv_k+\lambda_kv_k'
\end{equation}
and differentiating the normalization property yields
\begin{equation}
v'_k\cdot v_k
=
0
.
\end{equation}
Taking the inner product of $v_k$ and~\eqref{eq:spt1} gives
\begin{equation}
\label{eq:spt3}
\lambda'_k
=
v_k\cdot Av_k
.
\end{equation}
When the eigenvalue $\lambda_k$ is simple, the vector $v_k$ is uniquely defined (up to sign).
When the eigenvalue degenerates, we may not choose an eigenbasis of each eigenspace as we like, but it is determined by the perturbation $A'$ (and in some cases higher order derivatives).

Taking the inner product of $v_l$ and~\eqref{eq:spt1} gives
\begin{equation}
\label{eq:spt2}
v_l\cdot A'v_k
=
(\lambda_k-\lambda_l)v_l\cdot v'_k
.
\end{equation}
Let $P_k$ be the orthogonal projection to the eigenspace of $\lambda_k$, thought of as a non-square matrix.
The adjoint $P_k^T$ is the inclusion map.

The block of the perturbation $A'$ corresponding to the eigenspace of $\lambda_k$ is
\begin{equation}
A'_k
=
P_k A' P_k^T
.
\end{equation}
When $l\neq k$ but $\lambda_l=\lambda_k$, equation~\eqref{eq:spt2} shows that $A'_kv_k$ is orthogonal to $v_l$.
This is true for all such indices $l\neq k$, so for dimensional reasons $v_k$ is an eigenvector of $A'_k$.
By~\eqref{eq:spt3} the associated eigenvalue is $\lambda'_k$.

Therefore the first order perturbations to the spectrum of $A$ are the eigenvalues of the perturbation $A'$ restricted to the associated eigenblocks of $A$.
Because the map from a matrix to its eigenvalues is not linear, this ``spectral derivative'' is also not linear.
The spectrum of a matrix is Gateaux differentiable but not Fr\'echet differentiable if the perturbation breaks degeneracies.

For a concrete example, take the background matrix
\begin{equation}
A
=
\begin{pmatrix}
3&0&0\\
0&1&0\\
0&0&1
\end{pmatrix}
\end{equation}
and its first order perturbation
\begin{equation}
A'
=
\begin{pmatrix}
-2&0&5\\
0&1&2\\
5&2&1
\end{pmatrix}
.
\end{equation}
The first eigenvalue is $\lambda_1=3$, and its $s$-derivative is $\lambda_1'=-2$.
For the degenerate eigenvalue $\lambda_2=\lambda_3=1$, we first have to identify the block
\begin{equation}
A'_2
=
A'_3
=
\begin{pmatrix}
1&2\\
2&1
\end{pmatrix}
.
\end{equation}
Its eigenvalues are $-1$ and $3$, so $\lambda'_2=-1$ and $\lambda'_3=3$, or vice versa.

In our model in three or more dimensions one of the two eigenspaces of the background Christoffel matrix is degenerate.
While we should only use the spectrum of the corresponding perturbation block, we use the whole block instead.
This turns the linearized problem linear.

\bibliographystyle{abbrv}
\bibliography{bibliography}

\end{document}